\documentclass{amsart}

\usepackage[T1]{fontenc}
\usepackage{lmodern}
\usepackage{amsmath,amssymb,mathrsfs}
\usepackage[mathscr]{eucal}
\usepackage{enumitem}
\usepackage{microtype}
\usepackage[hidelinks,pdfusetitle]{hyperref}

\newtheorem{thm}{Theorem}[section]
\newtheorem{lem}[thm]{Lemma}
\newtheorem{prop}[thm]{Proposition}
\newtheorem{cor}[thm]{Corollary}
\numberwithin{equation}{section}

\theoremstyle{definition}
\newtheorem{defn}[thm]{Definition}
\newtheorem{ex}[thm]{Example}

\theoremstyle{remark}
\newtheorem{rem}[thm]{Remark}

\newcommand{\cl}{\operatorname{cl}}
\newcommand{\interior}{\operatorname{int}}
\newcommand{\Ls}{\operatorname{Ls}}

\newcommand{\cB}{\mathcal B}
\newcommand{\cC}{\mathcal C}
\newcommand{\cK}{\mathcal K}
\newcommand{\fD}{\mathfrak D}
\newcommand{\fA}{\mathcal{A}}
\newcommand{\N}{\mathbb N}

\begin{document}

\title[Bornologies and Kuratowski-Painlev\'e convergence]
{Bornologies and Kuratowski-Painlev\'e Convergence to the Empty Set Revisited}

\author{Basile Beyer de Ryke}
\address{12 Avenue Hamoir,
1180 Uccle, Belgium}
\email{basile.beyer@gmail.com}

\author{Gerald Beer}
\address{Department of Mathematics, California State University, Los Angeles,
5151 State University Drive, Los Angeles, California 90032, USA}
\email{gbeer@cslanet.calstatela.edu}

\subjclass[2020]{Primary 54E35, 54B20; Secondary 54A10, 54D45}
\keywords{bornology, metric bornology, Kuratowski--Painlev\'e convergence,
relatively compact set, Fell topology}
\date{September 5, 2026}

\hypersetup{
  pdftitle={Bornologies and Kuratowski-Painlev\'e Convergence to the Empty Set Revisited},
  pdfauthor={Basile Beyer de Ryke and Gerald Beer},
  pdfkeywords={bornology, metric bornology, Kuratowski--Painleve convergence,
    relatively compact set, Fell topology}
}

\begin{abstract}
Let $(X,\tau)$ be a noncompact metrizable space and let
$\langle T_n\rangle$ be a sequence of nonempty closed sets converging to the
empty set in the sense of Kuratowski--Painlev\'e.  The subsets of $X$ whose
closures meet at most finitely many terms form a bornology
$\cB_\tau(\langle T_n\rangle)$.  We give an internal criterion for inclusion
between two such bornologies and recover each of them from a canonical free
filter of closed sets.  In fact, a nontrivial bornology is sequence-induced
exactly when it has the enlargement property and its closed polar has
countable character.  Our principal result says that the bornology of
relatively compact sets is sequence-induced precisely when $X$ is locally
compact and separable; in the locally compact case the character of its
closed polar is the density of $X$.  In the penultimate section, we make a clever use of the sequence $T_n=\{n\}$ in
the real line to give a negative answer to a conjecture posed in 2001 about a
coarsest supertopology making the associated bornology a nontrivial metric
bornology. Finally, we end the paper with a more general result: a coarsest such metrizable supertopology exists
if and only if the original topology already makes the induced bornology a
nontrivial metric bornology, in which case the coarsest such supertopology is the
original topology itself.
\end{abstract}

\maketitle

\section{Introduction}

What should it mean for a sequence of nonempty closed subsets of a metrizable space to converge to
the empty set? A naive answer would be to equip the space with a compatible unbounded metric and
require that the sequence eventually leaves every bounded region. There are at least two problems with
this approach. First, the idea of bounded region is not well-defined, in that if the base metrizable space
is noncompact, there are uncountably many possible families of metrically bounded sets corresponding to
metrics compatible with the topology \cite[Theorem~15.5]{Beer2023}. Second, the sequence may seek to escape the space
through a hole in it. As an alternative, it has been consistently required that the sequence miss each compact
subset of the space eventually.

Such a sequence does not disappear without leaving a trace.  In
\cite{Beer2001}, Beer associated with a sequence $\langle T_n\rangle$
converging to the empty set the family of all subsets whose closures meet at most
finitely many of its terms.  This family is a nontrivial bornology, denoted here
by $\cB_\tau(\langle T_n\rangle)$.  In this way, a mode of escape from the
space produces a notion of boundedness on it.

Even very simple sequences can produce unexpectedly complicated
bornologies.  For example,
\[
 \{1\},\{2\},\{3\},\ldots
\]
in the real line induces a bornology with no countable base.  This example
will return in Section~6.  Before reaching it, we settle several basic
questions about the construction that were left unrecorded in
\cite{Beer2001}: how two induced bornologies can be compared, how finite
intersections are represented, and how metric bornologies arise from closed
sets escaping to infinity.

There is an unavoidable lower bound in all of these questions.  Every
relatively compact set belongs to $\cB_\tau(\langle T_n\rangle)$ for every
sequence converging to the empty set.  Thus the bornology of relatively compact subsets lies below every sequence-induced bornology.  This
brings up the principal question of the paper:
\[
 \text{When is this universal lower bound attained by a single sequence?}
\]
We prove that this happens precisely when $X$ is locally compact and
separable.

A second question is intrinsic.  The definition starts with a sequence, but
the resulting bornology need not remember that particular sequence.  How can
one recognize, from the bornology alone, whether some sequence converging to
the empty set induces it?  To answer this, we introduce its \emph{closed
polar}, a canonical filter of closed sets at infinity.  On a normal $T_2$
space, a nontrivial bornology is sequence-induced exactly when it has the
enlargement property and its closed polar has countable character.  A
representing sequence is then simply a choice of decreasing countable base
for the polar.

Finally, we return to the conjecture that closes \cite{Beer2001}.  Given a
sequence and a metrizable topology $\tau$, must there be a coarsest metrizable
\emph{supertopology} of $\tau$ for which the induced bornology becomes a nontrivial
metric bornology?  The answer is no.  Here and throughout the final two sections,
a \emph{supertopology} of $\tau$ means a topology $\sigma$ on $X$ with
$\tau\subseteq\sigma$.  A metrizable supertopology $\sigma$ of $\tau$ is called
\emph{successful} (for $\langle T_n\rangle$) when
$\cB_\sigma(\langle T_n\rangle)$ is a nontrivial metric bornology.  For the sequence
$\langle\{n\}\rangle$ in $\mathbb R$, we construct a decreasing family
of successful metrizable supertopologies whose intersection is the usual
topology, although that topology is not itself successful. We prove a stronger result, namely that a coarsest successful metrizable supertopology exists if and
only if the original topology already makes the induced bornology a nontrivial
metric bornology, in which case the coarsest supertopology is the original topology.

Section~2 supplies the required background.  Section~3 records the elementary
facts just mentioned.  Section~4 develops the closed polar and the intrinsic
representation theorem.  Section~5 treats the relatively compact bornology,
first directly and then through its polar.  Section~6 gives the
counterexample to the 2001 conjecture and Section~7 ends with the general theorem.
Throughout the manuscript, the following potential property of a bornology $\mathcal{B}$ will play a fundamental role:  whenever $A \in \mathcal{B}$ its closure is contained in the interior of another member of $\mathcal{B}$.  In fact we supply an unexpected characterization of this property to end Section 4.

\section{Preliminaries}

Throughout this article, we denote $\mathbb N=\{1,2,\ldots\}$. All topological spaces $(X,\tau)$ are assumed to satisfy the $T_2$ axiom.  If $A \subseteq X$, we denote its closure and interior by $\textrm{cl}(A)$ and $\textrm{int}(A)$.  Closures and interiors carry a
topological subscript when more than one topology is under discussion.

\begin{defn}[Relatively compact bornology]\label{def:relcompact-bornology}
A subset $A\subseteq X$ is called \emph{relatively compact} provided its closure is a compact subset of $X$.  We write
\[
 \cK_\tau(X)=\{A\subseteq X:\cl_\tau(A)\text{ is compact}\}
\]
for the bornology of relatively compact subsets of $(X,\tau)$.
\end{defn}
The $T_2$ axiom ensures that each compact subset is relatively compact; the $T_1$ axiom does not suffice in this regard. For a topological space $(X,\tau)$, the \emph{density} $d(X)$ is the least cardinality of a dense subset of $X$, and the \emph{weight} $w(X)$ is the least cardinality of a base for $\tau$.

Let $(X,\tau)$ be a topological space. A \emph{bornology} on $X$ is a family $\cB$ of subsets of
$X$ that covers $X$, is hereditary, and is closed under finite unions.  It
is \emph{nontrivial} if $X\notin\cB$.  A subfamily $\mathcal A$ is a
\emph{base} for $\cB$ if each member of $\cB$ is contained in some member
of $\mathcal A$. A base is called
\emph{closed, open}, or \emph{countable} when all its members are closed, all its members
are open, or it is countable, respectively.

Prominent examples of bornologies, with the relevant ambient structure
understood, include:
\begin{itemize}
\item the power set $\mathcal P(X)$;
\item the family of finite subsets of $X$;
\item the family of relatively compact subsets of a $T_1$ space \cite{BeerPajoohesh2026};
\item the family of metrically bounded subsets of a metric space;
\item the family of totally bounded subsets of a uniform space;
\item the family of Bourbaki bounded subsets of a uniform space
\cite{Beer2023,GarridoMerono2014,Hejcman1959,KunduAggarwal2023};
\item the family of
subsets of a topological vector space that are absorbed by each neighborhood of the origin.
\end{itemize}

For a metric $d$ on $X$, let
\[
 \cB_d(X)=\{A\subseteq X:A\text{ is }d\text{-bounded}\}.
\]
This is a bornology with a countable base.  A bornology on a metrizable
space $(X,\tau)$ is called a \emph{metric bornology} when it equals
$\cB_d(X)$ for some metric $d$ compatible with $\tau$.  If $d$ is bounded,
then $\cB_d(X)=\mathcal P(X)$; if $d$ is unbounded, then $\cB_d(X)$ is
nontrivial.  Thus a metric bornology is nontrivial precisely when every metric
representing it is unbounded.

The study of bornologies on metrizable spaces began in earnest with the
seminal paper of Hu \cite{Hu1949}.  We record the characterization that will
be used repeatedly below.

\begin{thm}[Hu \cite{Hu1949}]\label{thm:Hu}
Let $(X,\tau)$ be a metrizable space and let $\mathcal B$ be a bornology on
$X$.  Then $\mathcal B=\mathcal B_d(X)$ for some metric $d$ compatible with
$\tau$ if and only if both of the following conditions hold:
\begin{enumerate}
\item $\mathcal B$ has a countable base;
\item whenever $A\in\mathcal B$, there exists $B\in\mathcal B$ such that
      $A\subseteq B$ and $\cl(A)\subseteq\interior(B)$.
\end{enumerate}
\end{thm}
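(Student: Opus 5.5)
The plan is to prove the two directions of Hu's theorem separately. Necessity is a direct check; sufficiency is done by building an explicit compatible metric.

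\emph{Necessity.} Suppose $\cB=\cB_d(X)$ with $d$ compatible with $\tau$, and fix $x_0\in X$. The open balls $S_d(x_0,n)$, $n\in\N$, form a countable base, which gives (1). For (2), let $A\in\cB$. Then $A\subseteq S_d(x_0,n)$ for some $n$. Since $\{x: d(x_0,x)\le n\}$ is closed, we get
\[
\cl(A)\subseteq\{x:d(x_0,x)\le n\}\subseteq S_d(x_0,n+1)=\interior(S_d(x_0,n+1)).
\]
So $B=S_d(x_0,n+1)$ works.

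\emph{Sufficiency.} If $X\in\cB$, any bounded compatible metric (for example $\min\{\rho,1\}$) does the job, so assume $\cB$ is nontrivial. Fix a countable base $\{A_n\}$, which we may take increasing after replacing $A_n$ by $A_1\cup\cdots\cup A_n$. Using (2) recursively, choose $B_1=A_1$ and then $B_{n+1}\in\cB$ with
\[
B_n\cup A_{n+1}\subseteq B_{n+1},\qquad \cl(B_n)\subseteq\interior(B_{n+1}).
\]
Set $U_n=\interior(B_{n+1})$. This gives an increasing sequence of open sets with $\cl(B_n)\subseteq U_n\subseteq\cl(U_n)\subseteq \cl(B_{n+1})\subseteq U_{n+1}$. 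Each $U_n$ lies in $\cB$, the $U_n$ form a base for $\cB$, and they cover $X$ because $\cB$ covers $X$.

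By Urysohn's lemma in the metrizable (hence normal) space, pick continuous $f_n:X\to[0,1]$ with $f_n=0$ on $\cl(U_n)$ and $f_n=1$ off $U_{n+1}$. Define $f=\sum_n f_n$. On the open set $U_{k}$ all $f_n$ with $n\ge k$ vanish, so the sum is locally finite and $f$ is continuous. Moreover $f\le k$ on $U_k$, and $f\ge k-1$ off $U_k$, since then $x\notin U_{n+1}$ for every $n\le k-1$.

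Now let $\rho$ be a compatible metric bounded by $1$, and put
\[
d(x,y)=\rho(x,y)+|f(x)-f(y)|.
\]
This metric is compatible because $f$ is continuous; the $\rho$-term already gives the original topology and the added term is continuous. Let $x_0\in U_1$. A set is $d$-bounded iff $f$ is bounded on it. If $f\le k$ on $A$, then $A\subseteq U_{k+2}$ by the estimate above, so $A\in\cB$. Conversely, every member of $\cB$ lies in some $U_k$, where $f\le k$. Hence $\cB=\cB_d(X)$.

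The main obstacle is this sufficiency construction, specifically the nesting with closures inside interiors. That nesting is exactly what condition (2) supplies, and it is what makes $f$ both continuous (through local finiteness) and an exact gauge of $\cB$.
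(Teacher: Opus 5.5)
Your proof is correct and complete. The paper never proves this theorem; it quotes it from Hu \cite{Hu1949}, so there is no internal argument to follow or compare against.

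Your sufficiency route is the standard one. You convert the countable base into open members $U_n$ with $\cl(U_n)\subseteq U_{n+1}$, glue Urysohn functions into a locally finite gauge $f$ that is bounded exactly on members of the bornology, and set $d=\rho+|f(x)-f(y)|$ with $\rho\le 1$. This is the same device the paper uses in Step~2 of Theorem~\ref{thm:counterexample}, where $d_m=\rho_m+|h_m(x)-h_m(y)|$ with $\rho_m$ bounded and $h_m$ continuous. The paper's $\delta_m$ in Theorem~\ref{thm:coarsest-supertopology} is the same kind of construction.

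Three cosmetic points, none affecting validity:
\begin{enumerate}
\item The point $x_0\in U_1$ is never used, and $U_1$ could be empty if $A_1=\emptyset$, so simply drop it.
\item On $U_k$ you actually have $f\le k-1$, though your bound $f\le k$ is all you need.
\item The compatibility claim reads more cleanly as follows: $d\ge\rho$ shows $d$-convergence implies $\rho$-convergence, while continuity of $f$ gives the converse.
\end{enumerate}
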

The second condition will be called the \emph{enlargement property}.  Upon a little reflection, it is clear that these two conditions amount to saying that $\mathcal{B}$ has a countable base, a closed base and an open base; no two of these conditions implies the third.  We record some interesting
standard examples.

\begin{ex}
In the rationals $\mathbb{Q}$ viewed as a subspace of $\mathbb{R}$ equipped with the usual topology, the bornology of finite subsets has a countable base, a closed base, but not an open base.  In fact, no nonempty member of the bornology is open.
\end{ex}

\begin{ex}
In an uncountable set with the discrete topology, the finite subsets have a
clopen base but no countable base.
\end{ex}

\begin{ex}\label{ex:integer-bornology}
In $\mathbb{R}$ equipped with the usual topology, the bornology
\[
 \{A\subseteq\mathbb R:A\cap\N\text{ is finite}\}
\]
has a countable base, an open base, but no closed base.  In Section~6 we consider the
smaller bornology consisting of those $A$ for which
$\cl(A)\cap\N$ is finite.  It has a closed base but no countable base.
\end{ex}

For background on metric bornologies and their order structure, we refer the reader to the monograph \cite{Beer2023}, as well as the recent article \cite{BeerPajoohesh2026}.

Let $\cC(X)$ denote the family of closed subsets of $(X,\tau)$, including the
empty set, and let $\cC_0(X)=\cC(X)\setminus\{\emptyset \}$.  For a
sequence $\langle T_n\rangle$ in $\cC_0(X)$, its
\emph{Kuratowski-Painlevé upper limit} is
\[
 \Ls_\tau(T_n)
 =\bigcap_{n=1}^{\infty}
   \cl\left(\bigcup_{j\geq n}T_j\right).
\]
We say that $\langle T_n\rangle$ is \emph{convergent to the empty set} when
$\Ls_\tau(T_n)=\emptyset$.

\begin{defn}[Fell topology]\label{def:Fell-topology}
The \emph{Fell topology} on $\mathcal C(X)$ has as a subbase all sets of the form
\[
 \{A\in\mathcal C(X):A\cap V\neq\emptyset\},
\]
where $V$ runs over $\tau$, together with all sets of the form
\[
 \{A\in\mathcal C(X):A\cap K=\emptyset\},
\]
where $K$ runs over the compact subsets of $X$.
\end{defn}

For sequences of nonempty closed subsets of a metrizable space, the following conditions are equivalent:
\begin{enumerate}
\item $\langle T_n\rangle$ is convergent to the empty set;
\item every point has a neighborhood meeting only finitely many terms
      $T_n$;
\item every compact subset of $X$ misses $T_n$ eventually.
\end{enumerate}
The equivalence of these statements is a special case of \cite[Theorem~5.2.10]{Beer1993}, which asserts, in particular, that in a metrizable space classical Kuratowski--Painlev\'e convergence of a sequence of closed sets is equivalent to its convergence in the Fell topology.

The Fell topology on $\mathcal{C}(X)$ is remarkable for its compactness in complete generality. When the metrizable space is locally compact, then the Fell topology on $\mathcal{C}(X)$ is also Hausdorff.  When the metrizable space is in addition separable, then the Fell topology becomes metrizable as well.  Moreover,
$\cC_0(X)$ is open in $\cC(X)$, since it is the set of closed sets that
meet $X$.  Thus, when $X$ is locally compact and separable,
$\cC_0(X)$ is itself locally compact, separable, and metrizable.  For details, we recommend
\cite{Attouch1984,Beer1993,KleinThompson1984}.

A sequence of nonempty closed sets in a compact metrizable space cannot converge to the empty set; we clearly see this fact with condition $(3)$ when the subset is $X$ itself.  We will be tacitly assuming, unless explicitly stated otherwise,  that all of our metrizable spaces are noncompact.

For a sequence $\langle T_n\rangle$ that converges to the empty set, Beer \cite{Beer2001} defined
\begin{equation}
 \cB_\tau(\langle T_n\rangle)
 =\{A\subseteq X:\cl(A)\cap T_n=\emptyset\text{ eventually}\}.
 \label{eq:induced-bornology}
\end{equation}
 The family is always a nontrivial bornology (as $X$ hits each $T_n$) and
satisfies the enlargement property.  Thus it is a metric bornology exactly
when it has a countable base \cite[Theorem~4.1]{Beer2001}.

There is no reason for that countable base to exist, even when the terms of
the sequence and the underlying space are completely familiar.  For
$T_n=\{n\}$ in the usual real line, the bornology has no countable base. We believe that the most intriguing result of \cite[Theorem~4.2]{Beer2001} identifies precisely the spaces on
which such pathology never occurs:  $\mathcal{B}(\langle T_n \rangle)$ is always a metric bornology if and only if the set of accumulation points of $X$ is compact.

This condition belongs to a well-developed part of metric space theory.
Rainwater showed that compactness of the derived set is equivalent to the
existence of a compatible metric inducing the finest compatible uniformity
\cite[Theorem~2]{Rainwater1959}.  With respect to such a metric, every
continuous map into an arbitrary metric space is uniformly continuous.  These
are the UC-spaces, also called Atsuji spaces; see
\cite{Atsuji1958,Beer1993,Beer2023,KunduAggarwal2023,Rainwater1959}.

\section{Convergence to the empty set revisited}

There were five elementary points about the construction in
\eqref{eq:induced-bornology} that were left unrecorded in \cite{Beer2001}.
None is difficult in isolation, but together they reveal much of the order
and representation structure of the induced bornologies.  We collect them
here because they will be used repeatedly.

\begin{prop}\label{prop:nontrivial}
Let $\langle T_n \rangle$ be a sequence of nonempty closed sets in a noncompact metrizable space $(X,\tau)$ convergent to the empty set.  Then $\forall n\in \N,\cl(\bigcup_{j=n}^{\infty}T_j)=\bigcup_{j=n}^{\infty}T_j.$
\end{prop}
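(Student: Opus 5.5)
The plan is to show that a locally finite union of closed sets is closed. The inclusion $\bigcup_{j\ge n}T_j\subseteq\cl(\bigcup_{j\ge n}T_j)$ is trivial, so only the reverse inclusion needs an argument. We fix $n\in\N$ and a point $x\in\cl(\bigcup_{j\ge n}T_j)$, and we aim to show that $x\in T_j$ for some $j\ge n$.

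The key input is the equivalence recorded in Section~2: since $\Ls_\tau(T_n)=\emptyset$, condition (2) holds. So $x$ has an open neighborhood $V$ meeting only finitely many terms $T_j$. Let $F=\{j\ge n: T_j\cap V\neq\emptyset\}$; this is a finite set. Then
\[
 V\cap\bigcup_{j\ge n}T_j \;=\; V\cap\bigcup_{j\in F}T_j .
\]
Now suppose $x\notin\bigcup_{j\in F}T_j$. This finite union is closed, so
\[
 W=V\setminus\bigcup_{j\in F}T_j
\]
is an open neighborhood of $x$. By the displayed identity, $W$ misses $\bigcup_{j\ge n}T_j$, contradicting $x\in\cl(\bigcup_{j\ge n}T_j)$. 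Hence $x\in T_j$ for some $j\in F\subseteq\{n,n+1,\ldots\}$, which is the required inclusion.

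There is no real obstacle. The only point to note is that we use local finiteness (condition (2)) rather than the definition of $\Ls_\tau$ directly, and this is exactly where convergence to the empty set enters. Alternatively, one can argue directly from the definition: if $x$ had no neighborhood meeting only finitely many $T_j$, then $x$ would lie in every $\cl(\bigcup_{j\ge m}T_j)$ and hence in $\Ls_\tau(T_n)$, which is empty. So condition (2) follows immediately even without citing \cite{Beer1993}, and metrizability and noncompactness are not needed beyond this.
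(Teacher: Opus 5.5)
Your proof is correct and follows the same route as the paper: convergence to the empty set gives local finiteness of $\{T_j:j\ge n\}$, and a locally finite union of closed sets is closed. The only difference is that you prove this standard lemma, and derive local finiteness directly from $\Ls_\tau(T_n)=\emptyset$, where the paper simply cites both; your version also makes explicit that metrizability and noncompactness are not used, which is consistent with the paper's later remark that the argument works in normal $T_2$ spaces.
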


\begin{proof}
Since $\langle T_n\rangle$ converges to the empty set, the family
$\{T_n:n\in\mathbb N\}$ is locally finite; that is, every point of $X$
has a neighborhood meeting only finitely many of the sets $T_n$.
Therefore, for each $n\in\mathbb N$, the family $\{T_j:j\geq n\}$ is
locally finite. Since a locally finite union of closed sets is closed, $\operatorname{cl}\left(\bigcup_{j=n}^{\infty}T_j\right)
=\bigcup_{j=n}^{\infty}T_j.$
\end{proof}

One would want to have internal necessary and sufficient conditions for the inclusion $\mathcal{B}(\langle T_n \rangle) \subseteq \mathcal{B}(\langle S_n \rangle)$ to hold. We find it informative to first give internal necessary and sufficient conditions for the inclusion to fail.

\begin{prop}\label{prop:failure-containment}
Let $\langle S_n \rangle$  and $\langle T_n \rangle$ be two sequences of nonempty closed sets in a noncompact metrizable space $(X,\tau)$ both convergent to the empty set.  Then $\mathcal{B}(\langle S_n \rangle)$ fails to contain $\mathcal{B}(\langle T_n \rangle)$ if and only if there exists a strictly increasing sequence of positive integers $\langle n_j \rangle$ and for all $j\in \mathbb N$, elements $s_{n_j}$ of $S_{n_j}$ for which $(\diamondsuit)$ holds:

$$(\diamondsuit) \ \ \exists k \in \mathbb{N}, \ \forall m \geq k, \ \{s_{n_j}: j \in \mathbb{N}\} \cap T_m = \emptyset.$$
\end{prop}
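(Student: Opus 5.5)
The plan is to prove both implications directly from the definition \eqref{eq:induced-bornology}. The bridge between the two sides is the set $A=\{s_{n_j}:j\in\N\}$. The key auxiliary fact is that any such selection set is closed. Since $\langle S_n\rangle$ converges to the empty set, every point has a neighborhood meeting only finitely many $S_n$. Because $\langle n_j\rangle$ is strictly increasing, each index $n_j$ is used exactly once, so this neighborhood contains $s_{n_j}$ for only finitely many $j$. Hence the family of singletons $\{\{s_{n_j}\}:j\in\N\}$ is locally finite. As in the proof of Proposition~\ref{prop:nontrivial}, a locally finite union of closed sets is closed, so $\cl(A)=A$.

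For sufficiency, assume $(\diamondsuit)$ holds and let $A$ be the selection set. Since $\cl(A)=A$, condition $(\diamondsuit)$ says exactly that $\cl(A)\cap T_m=\emptyset$ for all $m\ge k$, so $A\in\cB(\langle T_n\rangle)$. On the other hand, $\cl(A)$ contains $s_{n_j}\in S_{n_j}$ for every $j$, so it meets infinitely many terms $S_n$. Therefore $A\notin\cB(\langle S_n\rangle)$, and the inclusion $\cB(\langle T_n\rangle)\subseteq\cB(\langle S_n\rangle)$ fails.

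For necessity, suppose some $A\in\cB(\langle T_n\rangle)\setminus\cB(\langle S_n\rangle)$. Membership in $\cB(\langle T_n\rangle)$ gives a $k$ with $\cl(A)\cap T_m=\emptyset$ for all $m\ge k$. Non-membership in $\cB(\langle S_n\rangle)$ means $\cl(A)$ meets $S_n$ for infinitely many $n$. We enumerate these indices as a strictly increasing sequence $\langle n_j\rangle$ and choose $s_{n_j}\in\cl(A)\cap S_{n_j}$. Then $\{s_{n_j}:j\in\N\}\subseteq\cl(A)$, so it misses $T_m$ for all $m\ge k$, which is $(\diamondsuit)$.

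The only step that needs care is the closedness of the selection set in the sufficiency direction. Without it, $(\diamondsuit)$ would only control $A$ itself and not $\cl(A)$. That step is where the convergence of $\langle S_n\rangle$ to the empty set is used, through local finiteness.
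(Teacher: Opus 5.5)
Your proposal is correct and follows essentially the same route as the paper. The selection set $\{s_{n_j}:j\in\N\}$ is closed (you get this from local finiteness of $\{S_n\}$, the paper from the fact that a cluster point would lie in $\Ls(S_n)=\emptyset$, which is the same observation), so it witnesses $\cB(\langle T_n\rangle)\not\subseteq\cB(\langle S_n\rangle)$; conversely, points chosen in $\cl(A)\cap S_{n_j}$ give $(\diamondsuit)$, which matches the paper's argument after it assumes $A$ closed.
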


\begin{proof}
For sufficiency, the sequence $\langle s_{n_j} \rangle$ cannot have a cluster point, for if it did, the cluster point would have to lie in $\textrm{Ls}(S_n)$ which is impossible.  As a result, $S := \{s_{n_j}: j \in \mathbb{N}\}$ is a closed set, so if $(\diamondsuit)$ holds, $S$ fails to hit $T_m$ for all but finitely many  $m$. By definition, we have $S \in \mathcal{B}(\langle T_n \rangle)$, while $S \notin \mathcal{B}(\langle S_n \rangle)$ because $S$ hits infinitely many $S_n$.

Conversely, suppose $A \in \mathcal{B}(\langle T_n \rangle) \backslash \mathcal{B}(\langle S_n \rangle)$.  Without loss of generality, we may assume $A$ is closed.  By definition, there exists $k \in \mathbb{N}$  such that $\forall m \geq k, \ A \cap T_m = \emptyset$ while $A$ hits infinitely many $S_n$.  We can find $n_1 < n_2 < n_3 < \cdots$ and for each positive integer $j$ some point $s_{n_j} \in A$ belonging to $S_{n_j}.$  For each $m \geq k$, we have

$$\{s_{n_j}: j \in \mathbb{N}\} \cap T_m \subseteq A \cap T_m = \emptyset,$$ \vspace {2mm}

\noindent and condition $(\diamondsuit)$ is established.
\end{proof}

\begin{thm}\label{cor:containment}
Under the hypotheses of Proposition~\ref{prop:failure-containment},
\[
 \cB(\langle T_n\rangle)\subseteq\cB(\langle S_n\rangle)
\]
if and only if every selection $s_{n_j}\in S_{n_j}$, made along a strictly
increasing sequence of indices, meets $T_n$ for infinitely many $n$.
\end{thm}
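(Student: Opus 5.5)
The plan is to obtain this theorem as the logical contrapositive of Proposition~\ref{prop:failure-containment}. That proposition says that the inclusion $\cB(\langle T_n\rangle)\subseteq\cB(\langle S_n\rangle)$ fails exactly when there is a strictly increasing sequence $\langle n_j\rangle$ and points $s_{n_j}\in S_{n_j}$ satisfying $(\diamondsuit)$. Consequently, the inclusion holds exactly when no such selection satisfies $(\diamondsuit)$. The remaining task is to show that the negation of $(\diamondsuit)$, for a fixed selection, says precisely that the selection meets $T_n$ for infinitely many $n$.

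For a fixed selection, write $S=\{s_{n_j}:j\in\N\}$. Condition $(\diamondsuit)$ reads $\exists k\ \forall m\ge k,\ S\cap T_m=\emptyset$, that is, $S$ meets only finitely many $T_m$. Its negation is $\forall k\ \exists m\ge k$ with $S\cap T_m\neq\emptyset$, which is the statement that $\{m: S\cap T_m\neq\emptyset\}$ is infinite. This is the meaning of ``meets $T_n$ for infinitely many $n$'' in the statement. Putting this together:
\[
\cB(\langle T_n\rangle)\subseteq\cB(\langle S_n\rangle)
\iff \text{no selection satisfies }(\diamondsuit)
\iff \text{every selection meets infinitely many }T_n .
\]

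The only point needing care is the phrase ``meets $T_n$''. It must be read as the set $S$ intersecting $T_n$, not as individual points or closures, and the quantifier structure has to match $(\diamondsuit)$ exactly. Closures cause no trouble here: the proof of Proposition~\ref{prop:failure-containment} shows that $S$ is already closed, because its points come from infinitely many distinct $S_n$ and $\Ls(S_n)=\emptyset$. So I expect no genuine obstacle, and the proof is a short direct deduction.
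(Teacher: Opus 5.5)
Your proposal is correct and takes the same approach as the paper. The paper states this theorem without a separate proof, as the immediate contrapositive of Proposition~\ref{prop:failure-containment}, and your negation of $(\diamondsuit)$ for a fixed selection (the selection set meets $T_m$ for infinitely many $m$) is exactly the step that turns the proposition into the theorem.
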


The next proposition establishes that sequence-induced bornologies are stable under finite intersections, with a
particularly simple representation.

\begin{prop}\label{prop:unions}
Let $\langle S_n\rangle$ and $\langle T_n\rangle$ be two sequences of nonempty closed sets in a noncompact metrizable space $(X,\tau)$ convergent to the empty set.  Then $\langle S_n\cup T_n\rangle$ does so as well, and
\[
 \cB(\langle S_n\cup T_n\rangle)
 =\cB(\langle S_n\rangle)\cap\cB(\langle T_n\rangle).
\]
\end{prop}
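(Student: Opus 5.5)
The plan has two parts: first show that $\langle S_n\cup T_n\rangle$ converges to the empty set, then prove the two inclusions of the bornology identity directly from the definition \eqref{eq:induced-bornology}.

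For the first part, each $S_n\cup T_n$ is closed and nonempty, being a finite union of closed sets with nonempty pieces. To see that the sequence converges to the empty set, I would use the equivalent criterion that every compact set is eventually missed. Fix a compact $K\subseteq X$. Since both original sequences converge to the empty set, there are $k_1$ and $k_2$ with $K\cap S_n=\emptyset$ for $n\ge k_1$ and $K\cap T_n=\emptyset$ for $n\ge k_2$. Then $K\cap(S_n\cup T_n)=\emptyset$ for all $n\ge\max\{k_1,k_2\}$. Alternatively, one can argue with the upper limit: since $\cl\bigl(\bigcup_{j\ge n}(S_j\cup T_j)\bigr)=\cl\bigl(\bigcup_{j\ge n}S_j\bigr)\cup\cl\bigl(\bigcup_{j\ge n}T_j\bigr)$ and these sets decrease in $n$, we get
\[
\Ls(S_n\cup T_n)=\Ls(S_n)\cup\Ls(T_n)=\emptyset.
\]

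For the equality, note that $\cl(A)\cap(S_n\cup T_n)=\emptyset$ holds exactly when both $\cl(A)\cap S_n=\emptyset$ and $\cl(A)\cap T_n=\emptyset$.

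\begin{itemize}
\item If $A\in\cB(\langle S_n\cup T_n\rangle)$, then from some index on $\cl(A)$ misses $S_n\cup T_n$. Hence from that index on it misses both $S_n$ and $T_n$, so $A$ belongs to both bornologies.
\item Conversely, if $\cl(A)$ misses $S_n$ for $n\ge k_1$ and misses $T_n$ for $n\ge k_2$, then it misses $S_n\cup T_n$ for $n\ge\max\{k_1,k_2\}$.
\end{itemize}

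There is no real obstacle in this argument. The only point requiring care is to use the same index $n$ for both sequences and to take the maximum of the two thresholds. Proposition~\ref{prop:nontrivial} is not needed.
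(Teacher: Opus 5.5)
Your proposal is correct and follows the paper's proof essentially verbatim. Like the paper, you get convergence from the criterion that every compact set eventually misses both sequences (taking the larger threshold), and you get the identity from the observation that $\cl(A)$ misses $S_n\cup T_n$ exactly when it misses both $S_n$ and $T_n$. Your alternative upper-limit argument is also valid, since the closed tails decrease and hence $\bigcap_n(A_n\cup B_n)=\bigcap_n A_n\cup\bigcap_n B_n$.
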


\begin{proof}
Every compact set misses both $S_n$ and $T_n$ eventually, and therefore
misses $S_n\cup T_n$ eventually.  Moreover, for $A\subseteq X$, the set
$\cl(A)$ misses $S_n\cup T_n$ eventually if and only if it misses each of
$S_n$ and $T_n$ eventually.  This is precisely the asserted identity.
\end{proof}

Every nontrivial metric bornology can be represented by a decreasing sequence that converges to the empty set.

\begin{prop}\label{prop:metric-realization}
Let $d$ be an unbounded compatible metric on a noncompact metrizable space $(X,\tau)$. Then there exists a decreasing sequence
$\langle T_n \rangle$ in $\mathcal{C}_0(X)$ convergent to the empty set for which $$\mathcal{B}(\langle T_n \rangle) = \mathcal{B}_d(X).$$
\end{prop}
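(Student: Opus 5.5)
Fix a base point $x_0\in X$ and take the complements of closed balls,
\[
 T_n=\{x\in X: d(x,x_0)\geq n\}\qquad(n\in\N).
\]
Each $T_n$ is closed since $x\mapsto d(x,x_0)$ is continuous. Each $T_n$ is nonempty because $d$ is unbounded: if $d(x,x_0)<n$ for all $x$, the triangle inequality would bound $d$ by $2n$. The sequence is decreasing by construction. It converges to the empty set by criterion (3) of Section~2. A compact set $K$ is $d$-bounded, so $K\subseteq\{x:d(x,x_0)<N\}$ for some $N$, and hence $K\cap T_n=\emptyset$ for all $n\geq N$. (Criterion (2) works equally well: the open unit ball about any point $x$ meets $T_n$ only when $n< d(x,x_0)+1$.)

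Next comes the inclusion $\cB_d(X)\subseteq\cB(\langle T_n\rangle)$. If $A$ is $d$-bounded, then so is $\cl(A)$, because the closure of a set of diameter $r$ has diameter at most $r$. Thus $\cl(A)\subseteq\{x:d(x,x_0)\leq M\}$ for some $M$, and $\cl(A)\cap T_n=\emptyset$ for every $n>M$. So $A$ lies in $\cB(\langle T_n\rangle)$.

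For the reverse inclusion, suppose $\cl(A)\cap T_n=\emptyset$ for all $n\geq k$. In particular $A\cap T_k=\emptyset$, so every $a\in A$ satisfies $d(a,x_0)<k$. Then $A$ has diameter at most $2k$, so $A\in\cB_d(X)$. This completes the identity $\cB(\langle T_n\rangle)=\cB_d(X)$.

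There is no real obstacle here. The only points needing care are that the sets $T_n$ are nonempty, which is exactly where unboundedness of $d$ enters, and that closures of bounded sets remain bounded. One could also use open-ball complements $\{x:d(x,x_0)>n\}$, but these need not be closed, so the non-strict inequality is the right choice.
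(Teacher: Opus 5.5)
Your proof is correct and follows the paper's argument: the paper uses the same sets $T_n=\{x\in X:d(x,x_0)\geq n\}$, gets nonemptiness from unboundedness of $d$, and proves the identity by noting that $\cl(A)$ misses some $T_n$ if and only if $A$ is $d$-bounded. You supply more detail than the paper does, for instance checking convergence through compact sets, where the paper simply observes that closed sets decreasing to $\emptyset$ have empty upper limit.
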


\begin{proof}
Fix $x_0\in X$ and set
\[
 T_n=\{x\in X:d(x,x_0)\geq n\}.
\]
Each $T_n$ is nonempty, because $d(\cdot,x_0)$ is unbounded.  The sets are closed, decrease to the empty set, and hence converge to the empty set.  Finally, $\cl(A)$ misses some $T_n$ if and only if
$A$ is $d$-bounded.
\end{proof}

We close the section with a less immediate observation: closed members of an
induced bornology can approach the whole space uniformly.  Here an incomplete
compatible metric is useful.

\begin{prop}\label{prop:uniformly-dense}
Let $(X,\tau)$ be a noncompact metrizable space.  There are a compatible metric
$d$, a convergent sequence $\langle T_n\rangle$ to the empty set, and an increasing sequence
of closed members $\langle B_n\rangle$ of
$\cB(\langle T_n\rangle)$ such that
\[
 \lim_{n\to\infty}\sup_{x\in X}d(x,B_n)=0.
\]
\end{prop}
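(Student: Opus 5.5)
The plan is to build everything from a Cauchy sequence that has no limit, using a compatible metric that is incomplete. The key input is the classical fact (see \cite{Beer2023}) that a metrizable space is noncompact if and only if it admits an incomplete compatible metric. Fix such a metric $d$ and a $d$-Cauchy sequence with no convergent subsequence.

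\emph{Step 1: choose the sequence.} Passing to a subsequence, I get distinct points $x_k$ with
\[
d(x_k,x_{k+1})<2^{-k}\quad\text{for all }k.
\]
Since no subsequence converges, $D=\{x_k:k\in\N\}$ has no accumulation point. So $D$ is closed and discrete, and in particular $\delta_k:=d\bigl(x_k,D\setminus\{x_k\}\bigr)>0$ for every $k$.

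\emph{Step 2: define $T_n$ and $B_n$.} Put $T_n=\{x_{2n}\}$. Because the $x_{2n}$ are distinct points of a closed discrete set, the family $\{T_n\}$ is locally finite. By the equivalences recorded in Section~2, $\langle T_n\rangle$ converges to the empty set. Next choose radii
\[
0<\varepsilon_k<\min\{2^{-k},\delta_k/2\},
\]
let $U_k$ be the open $d$-ball of radius $\varepsilon_k$ about $x_k$, and set
\[
B_n=X\setminus\bigcup_{k\geq n}U_{2k}.
\]
The family $\{U_{2k}\}$ is locally finite: the radii tend to $0$ and the centres have no cluster point. Hence its union is open and $B_n$ is closed. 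The sets $B_n$ increase in $n$. Also $B_n\cap T_m=\emptyset$ for every $m\geq n$, so $B_n\in\cB(\langle T_n\rangle)$.

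\emph{Step 3: the uniform estimate.} Let $x\in X$. If $x\in B_n$, then $d(x,B_n)=0$. Otherwise $x\in U_{2k}$ for some $k\geq n$. The odd point $x_{2k+1}$ lies in no ball $U_{2j}$, since $\varepsilon_{2j}<\delta_{2j}/2\le d(x_{2j},x_{2k+1})$. Therefore $x_{2k+1}\in B_n$, and
\[
d(x,B_n)\le d(x,x_{2k})+d(x_{2k},x_{2k+1})<2^{-2k}+2^{-2k}\le 2^{1-2n}.
\]
Thus $\sup_{x\in X}d(x,B_n)\le 2^{1-2n}\to0$, which is the required conclusion.

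The main obstacle is Step 1, namely guaranteeing an incomplete compatible metric on an arbitrary noncompact metrizable space. If I do not cite it, I would construct it directly. First take a closed discrete sequence $\langle x_k\rangle$, which exists by noncompactness, and a discrete family of open neighbourhoods $W_k\ni x_k$. Then modify a compatible metric so that it gets glued along these points, making $\langle x_k\rangle$ Cauchy. One way is to embed $X$ homeomorphically into a larger metric space in which the $x_k$ converge to an added point. The remaining steps are routine once this is in place.
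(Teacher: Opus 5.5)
Your proof is correct. It starts where the paper's does: an incomplete compatible metric, and singletons $T_n$ taken along a Cauchy sequence with no limit. The sets $B_n$, however, are built differently.

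The paper passes to the completion $(\widehat X,\widehat d)$ and fixes the missing limit $x_0\in\widehat X\setminus X$. It picks $t_n\in X$ with $r_n=\widehat d(t_n,x_0)$ strictly decreasing to $0$, and removes a single shrinking ball: $B_n=\{x\in X:\widehat d(x,x_0)\geq r_n\}$. The term $t_n$ itself lies in $B_n$ and serves as the witness, so $\sup_x d(x,B_n)\leq 2r_n$ follows from one triangle inequality. No distinctness or isolation radii are needed.

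You instead stay inside $X$, delete tiny balls about the even-indexed points, and use the interleaved odd-indexed points as witnesses. That choice is what forces the bookkeeping with $\delta_k$ and $\varepsilon_k$.

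In exchange, your argument uses the Cauchy property only to get $d(x_{2k},x_{2k+1})\to 0$ on a closed discrete set of distinct points. So it works verbatim for any compatible metric admitting such pairs, complete or not. For instance, with the usual metric on $\mathbb R$ and $T_n=\{n\}$, the sets $B_n=\mathbb R\setminus\bigcup_{k\geq n}(k-\varepsilon_k,k+\varepsilon_k)$ with $\varepsilon_k\downarrow 0$ do the job.

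This hypothesis is also sharp. If the conclusion holds for a given $d$, then points $t_m\in T_m$ with $m$ large lie outside the closed set $B_n$. Yet each is within $2\sup_x d(x,B_n)$ of some point of $B_n$, and after thinning this yields exactly such pairs. So your route identifies precisely the metrics for which the statement holds, namely the compatible metrics that are not UC. The paper's route covers only the incomplete ones, which suffice for the existence claim.

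Two minor points. Local finiteness of $\{U_{2k}\}$ is not needed for $B_n$ to be closed, since any union of open balls is open. Your fallback sketch for producing an incomplete metric is too loose to stand alone, but it is not needed. The existence of an incomplete compatible metric on a noncompact metrizable space is classical; the paper cites Engelking for it.
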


\begin{proof}
A metrizable space is compact exactly when every compatible metric is
complete \cite[p.~347]{Engelking1977}.  Choose an incomplete compatible
metric $d$, let $(\widehat X,\widehat d)$ be its completion, and fix
$x_0\in\widehat X\setminus X$.  We can choose points $t_n\in X$ so that $r_n:=\widehat d(t_n,x_0)$ decreases strictly to $0$.

Set $T_n=\{t_n\}$ and
\[
 B_n=\{x\in X:\widehat d(x,x_0)\geq r_n\}.
\]
The sequence $\langle T_n\rangle$ has no cluster point in $X$, and hence it converges to the empty set.  Each $B_n$ is closed, the sequence $\langle B_n\rangle$ is
increasing, and $B_n\cap T_j=\emptyset$ for every $j>n$.  Thus
$B_n\in\cB(\langle T_n\rangle)$.  If
$\widehat d(x,x_0)<r_n$, then
\[
 d(x,t_n)\leq\widehat d(x,x_0)+\widehat d(x_0,t_n)<2r_n,
\]
whereas $x\in B_n$ otherwise.  Consequently
$\sup_{x\in X} d(x,B_n)\leq2r_n\to0$.
\end{proof}

These facts display both the flexibility and the nonuniqueness of the
construction.  Nontrivial metric bornologies always admit representations by decreasing
closed sets, while other induced bornologies may behave quite differently.
More importantly, the sequence itself is not canonical.  We next identify
the object at infinity that the bornology actually remembers.

\section{The closed polar and an intrinsic representation}

Throughout this section, $(X,\tau)$ is a normal $T_2$ space.

\begin{defn}[Closed polar]\label{def:closed-polar}
For a nontrivial bornology $\cB$ on $X$, its \emph{closed polar} is
\begin{equation}
 \fD_\tau(\cB)=
 \{F\in\cC(X): C\in\cB\text{ whenever }C\in\cC(X)
                      \text{ and }C\cap F=\emptyset\}.
 \label{eq:closed-polar}
\end{equation}
\end{defn}

The preceding results begin with a sequence and then retain only the
bornology it induces.  We now reverse this process.  If
\[
 F_n=\bigcup_{j=n}^{\infty}T_j
\]
is the \emph{closed tail tower} of a sequence that converges to the empty set, then a set belongs to the induced bornology exactly when
its closure avoids at least one of the $F_n$.  This suggests looking, for an
arbitrary bornology, at all closed sets that can play the role of such a
tail; Definition~\ref{def:closed-polar} records precisely this family.

Hu's original point of view associates with a nontrivial bornology $\cB$ its
cobounded filter
\[
 \mathscr I(\cB)=\{X\setminus B:B\in\cB\}.
\]
For the present problem, however, complements alone do not remember that the
definition tests $\cl(B)$ for membership.  We therefore work directly with closed
sets and retain exactly the information needed for that test.

We will use the following notation for the enlargement property:
\begin{equation}
 (\mathrm E_\tau)\qquad
 A\in\cB\Longrightarrow
 \text{there is }E\in\cB\text{ with }
 \cl(A)\subseteq\interior(E).
 \label{eq:enlargement}
\end{equation}

We notice that the closed polar is not, in general, the usual cobounded filter. Next, by a \emph{closed
filter} we mean a nontrivial filter in $\cC(X)$: it is upwardly stable among closed sets and stable under finite intersections.  Such a
filter $\fD$ is \emph{free} if $\bigcap\fD=\emptyset$. A subfamily $\fA \subseteq \fD$ is a \emph{base} for $\fD$ if every member of $\fD$ contains some member of $\fA$. Its \emph{character}
$\chi(\fD)$ is the least cardinality of a base.

Conversely, if $\fD$ is a closed filter, we introduce
\begin{equation}
 \cB_\tau[\fD]
 =\{A\subseteq X:\cl(A)\cap F=\emptyset
                   \text{ for some }F\in\fD\}.
 \label{eq:bornology-filter}
\end{equation}
We see that if $\fD$ is free, then $\cB_\tau[\fD]$ is a nontrivial bornology.
Indeed, the $T_2$ axiom and freeness put every singleton in this family,
while the hereditary property is immediate and closure under finite unions follows from the
filter property.  Nontriviality follows because no member of a nontrivial closed
filter is empty.

\begin{lem}[Closed polar lemma]\label{lem:closed-polar}
Let $\cB$ be a nontrivial bornology on $(X,\tau)$. Then
$\fD_\tau(\cB)$ is a nontrivial closed filter. If $\cB$ satisfies
$(\mathrm E_\tau)$, then $\fD_\tau(\cB)$ is free and
\[
\cB=\cB_\tau[\fD_\tau(\cB)]. \tag{$\spadesuit$}\label{eq:polar-recovery}
\]
Conversely, $(\spadesuit)$ implies $(\mathrm E_\tau)$.
\end{lem}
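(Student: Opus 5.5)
We split the argument into three parts: the filter property, freeness together with $(\spadesuit)$, and the converse. Throughout, we use that $\cB$ is hereditary and closed under finite unions, and that $X\notin\cB$.

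\emph{Filter property.} First, $X\in\fD_\tau(\cB)$, because the only closed set missing $X$ is $\emptyset$, and $\emptyset\in\cB$. Upward stability among closed sets is immediate: if $F\subseteq F'$, then every closed $C$ missing $F'$ also misses $F$. For intersections, take $F_1,F_2\in\fD_\tau(\cB)$ and a closed $C$ with $C\cap F_1\cap F_2=\emptyset$. Then $C\cap F_1$ and $C\cap F_2$ are disjoint closed sets, so normality yields disjoint open sets $U_1\supseteq C\cap F_1$ and $U_2\supseteq C\cap F_2$. We write
\[
C=(C\setminus U_1)\cup(C\setminus U_2),
\]
which holds since $U_1\cap U_2=\emptyset$. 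Here $C\setminus U_1$ is closed and misses $F_1$, so it lies in $\cB$; likewise $C\setminus U_2$ misses $F_2$ and lies in $\cB$. Hence $C\in\cB$, and $F_1\cap F_2\in\fD_\tau(\cB)$. For nontriviality, note that $\emptyset\notin\fD_\tau(\cB)$: otherwise the closed set $X$, which misses $\emptyset$, would belong to $\cB$. This normality step is the only real obstacle.

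\emph{Freeness and $(\spadesuit)$ under $(\mathrm E_\tau)$.} The key observation is that for $E\in\cB$, the set $F=X\setminus\interior(E)$ is closed and belongs to $\fD_\tau(\cB)$. Indeed, any closed $C$ with $C\cap F=\emptyset$ satisfies $C\subseteq\interior(E)\subseteq E$, so $C\in\cB$ by heredity.
\begin{itemize}
\item \emph{Freeness.} Given $x\in X$, we have $\{x\}\in\cB$ because $\cB$ covers $X$. By $(\mathrm E_\tau)$ there is $E\in\cB$ with $\cl\{x\}\subseteq\interior(E)$. Then $x\notin X\setminus\interior(E)$, which is a member of the polar, so $x\notin\bigcap\fD_\tau(\cB)$.
\item \emph{Inclusion $\cB\subseteq\cB_\tau[\fD_\tau(\cB)]$.} For $A\in\cB$, choose $E\in\cB$ with $\cl(A)\subseteq\interior(E)$ by $(\mathrm E_\tau)$. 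Then $\cl(A)$ misses $X\setminus\interior(E)$, which lies in the polar.
\item \emph{Inclusion $\cB_\tau[\fD_\tau(\cB)]\subseteq\cB$.} If $\cl(A)\cap F=\emptyset$ for some $F\in\fD_\tau(\cB)$, then the definition of the polar applied to the closed set $C=\cl(A)$ gives $\cl(A)\in\cB$. Hence $A\in\cB$ by heredity.
\end{itemize}

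\emph{Converse.} Assume $(\spadesuit)$ and let $A\in\cB$. Then $\cl(A)\cap F=\emptyset$ for some $F\in\fD_\tau(\cB)$. Since $\cl(A)$ and $F$ are disjoint closed sets, normality gives an open $U$ with
\[
\cl(A)\subseteq U\subseteq\cl(U)\subseteq X\setminus F.
\]
Thus $\cl(U)$ is closed and misses $F$, so $\cl(U)\in\cB$ by the definition of the polar. Setting $E=\cl(U)$, we get $\cl(A)\subseteq U\subseteq\interior(E)$ with $E\in\cB$, which is exactly $(\mathrm E_\tau)$.
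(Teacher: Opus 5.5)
Your proof is correct and follows the paper's argument essentially step for step: you use the same normality splitting $C=(C\setminus U_1)\cup(C\setminus U_2)$ for closure under finite intersections, the same witnesses $X\setminus\interior(E)$ for freeness and for $(\spadesuit)$, and the same set $\cl(U)$ for the converse. The only cosmetic difference is that you separate $C\cap F_1$ and $C\cap F_2$ by disjoint open sets of $X$ rather than by relatively open sets of the subspace $C$, which is equally valid.
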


\begin{proof}
The set $X$ belongs to $\fD_\tau(\cB)$, whereas $\emptyset$ does not,
since $X\notin\cB$, and upward closure is immediate. Let
$F,G\in\fD_\tau(\cB)$, and suppose that $C$ is closed and
$C\cap F\cap G=\emptyset$. In the normal closed subspace $C$, the
disjoint closed sets $C\cap F$ and $C\cap G$ have disjoint relatively
open neighborhoods $U$ and $V$. The sets
$C_F=C\setminus U$ and $C_G=C\setminus V$ are closed in $X$, cover $C$,
and satisfy $C_F\cap F=C_G\cap G=\emptyset$. Thus
$C_F,C_G\in\cB$, whence $C\in\cB$. Therefore
$F\cap G\in\fD_\tau(\cB)$, proving that $\fD_\tau(\cB)$ is a
nontrivial closed filter.

Assume now that $(\mathrm E_\tau)$ holds. For each $x\in X$, choose
$E\in\cB$ such that $x\in\interior(E)$. Then
$X\setminus\interior(E)$ belongs to $\fD_\tau(\cB)$ and omits $x$, so
the filter is free. If $A\in\cB$, choose $E\in\cB$ such that
$\cl(A)\subseteq\interior(E)$. The set
$F=X\setminus\interior(E)$ lies in $\fD_\tau(\cB)$ and is disjoint from
$\cl(A)$, proving one inclusion in $(\spadesuit)$. Conversely, if
$\cl(A)$ is disjoint from some $F\in\fD_\tau(\cB)$, then the definition
of the closed polar gives $\cl(A)\in\cB$, and hence $A\in\cB$ by
heredity. Thus $(\spadesuit)$ holds.

Finally, suppose that $(\spadesuit)$ holds and let $A\in\cB$. There is
$F\in\fD_\tau(\cB)$ disjoint from $\cl(A)$. By normality, there exists
an open set $W$ such that
\[
\cl(A)\subseteq W\subseteq\cl(W)\subseteq X\setminus F.
\]
Since $\cl(W)$ is closed and disjoint from $F$, it belongs to $\cB$.
Therefore $\cl(A)\subseteq\interior(\cl(W))$, so $\cl(W)$ witnesses
$(\mathrm E_\tau)$.
\end{proof}

\begin{rem}\label{rem:balanced-filter}
The closed polar has a precise antecedent in filter theory.  Let
\[
 \mathscr F_\tau(\cB)
 =\{F\in\cC(X):X\setminus F\in\cB\},
\]
the closed trace of Hu's cobounded filter.  This is a closed filter.  Following
Carlson \cite[Theorem~3.6]{Carlson1989}, the \emph{balanced hull}
$b(\mathscr F)$ of a closed filter $\mathscr F$ is the family of all
closed sets $G$ such that every open set $U\supseteq G$ contains some
member of $\mathscr F$.  If $\cB$ satisfies $(\mathrm E_\tau)$, then
\begin{equation}
 \fD_\tau(\cB)=b(\mathscr F_\tau(\cB)).
 \label{eq:balanced-polar}
\end{equation}
Indeed, suppose first that $G\in\fD_\tau(\cB)$ and that $U$ is open with
$G\subseteq U$.  The closed set $C=X\setminus U$ belongs to $\cB$.
Choose $E\in\cB$ with $C\subseteq\interior(E)$.  Then
$F=X\setminus\interior(E)$ belongs to $\mathscr F_\tau(\cB)$ and is
contained in $U$.  Conversely, suppose that $G$ has Carlson's property
and let $C$ be closed and disjoint from $G$.  Some
$F\in\mathscr F_\tau(\cB)$ is contained in $X\setminus C$, whence
$C\subseteq X\setminus F\in\cB$.  Thus $G\in\fD_\tau(\cB)$.

Carlson's balanced hull therefore supplies the filter-theoretic envelope of
the construction.  The recovery of the bornology from this envelope, and
the countable-character criterion that converts it into sequences of closed sets converging to the empty set,
are the features used here.
\end{rem}

The closed polar lemma has removed the representing sequence from the
problem.  A sequence can be recovered precisely when this canonical filter
has countable character.

\begin{thm}[Intrinsic sequence criterion]\label{thm:intrinsic}
Let $\cB$ be a nontrivial bornology on a normal $T_2$ space $(X,\tau)$.  The
following conditions are equivalent.
\begin{enumerate}
\item There is a sequence $\langle T_n\rangle$ of nonempty closed sets converging to the empty set
      such that $\cB=\cB_\tau(\langle T_n\rangle)$.
\item The bornology $\cB$ satisfies $(\mathrm E_\tau)$, and its closed
      polar has a countable base.
\end{enumerate}
Moreover, under these conditions every decreasing countable base
$\langle F_n\rangle$ of $\fD_\tau(\cB)$
decreases to the empty set, and satisfies
\begin{equation}
 \cB=\{A\subseteq X:\cl(A)\cap F_n=\emptyset
                       \text{ for some }n\}.
 \label{eq:decreasing-base}
\end{equation}
\end{thm}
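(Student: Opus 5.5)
The plan is to route both directions through the closed polar lemma (Lemma~\ref{lem:closed-polar}), with the closed tail tower $F_n=\bigcup_{j\ge n}T_j$ serving as the candidate countable base. First I will record that the argument of Proposition~\ref{prop:nontrivial} works in any $T_2$ space, not just metrizable ones. If $\Ls_\tau(T_n)=\emptyset$, then each $x$ lies outside some $\cl(\bigcup_{j\ge n}T_j)$. So $x$ has a neighborhood meeting only finitely many $T_j$, the family $\{T_j\}$ is locally finite, and each $F_n$ is closed. Consequently $A\in\cB_\tau(\langle T_n\rangle)$ exactly when $\cl(A)\cap F_n=\emptyset$ for some $n$.

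For (1)$\Rightarrow$(2), the enlargement property comes from normality. If $\cl(A)$ misses $F_n$, choose an open $W$ with $\cl(A)\subseteq W\subseteq\cl(W)\subseteq X\setminus F_n$. Then $\cl(W)\in\cB$, because it misses $F_n$, and $\cl(A)\subseteq\interior(\cl(W))$. Each $F_n$ lies in $\fD_\tau(\cB)$ by the same observation: any closed set missing $F_n$ is in $\cB$.

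The main point, and the step I expect to need the most care, is showing that $\{F_n\}$ is a base for the polar. Let $G\in\fD_\tau(\cB)$ and suppose no $F_n\subseteq G$. Pick $x_n\in F_n\setminus G$ and put $C=\{x_n:n\in\N\}$.
\begin{enumerate}
\item By local finiteness, each point has a neighborhood meeting only finitely many $F_n$, hence only finitely many $x_n$.
\item Since the space is $T_1$, $C$ is closed.
\item As $C$ is disjoint from $G$ and $G$ is in the polar, $C\in\cB$, so $C$ misses some $F_m$.
\item But $x_n\in F_n\subseteq F_m$ for all $n\ge m$, which is a contradiction.
\end{enumerate}
Hence $\{F_n\}$ is a countable base.

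For (2)$\Rightarrow$(1), take a countable base $\{G_k\}$ of $\fD_\tau(\cB)$ and set $F_n=G_1\cap\cdots\cap G_n$. This is a decreasing base consisting of nonempty members of the polar, since the polar is a nontrivial filter. By Lemma~\ref{lem:closed-polar}, $(\mathrm E_\tau)$ makes the polar free. Because every member contains some $F_n$, we get $\bigcap_n F_n=\bigcap\fD_\tau(\cB)=\emptyset$. Put $T_n=F_n$. Since the sequence decreases, $\Ls_\tau(T_n)=\bigcap_n\cl(\bigcup_{j\ge n}F_j)=\bigcap_n F_n=\emptyset$, so it converges to the empty set. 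Also by Lemma~\ref{lem:closed-polar}, $\cB=\cB_\tau[\fD_\tau(\cB)]$. A closed set missing some member of the polar misses some $F_n$, and for a decreasing sequence this is the same as missing $T_n$ eventually. Therefore $\cB=\cB_\tau(\langle T_n\rangle)$.

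The ``moreover'' clause repeats this last argument verbatim for an arbitrary decreasing countable base $\langle F_n\rangle$. Freeness forces $\bigcap_n F_n=\emptyset$, and $(\spadesuit)$ together with the base property gives \eqref{eq:decreasing-base}.
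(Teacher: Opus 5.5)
Your proposal is correct and follows the paper's proof essentially step for step: the tail tower $F_n=\bigcup_{j\ge n}T_j$, normality for $(\mathrm E_\tau)$, a diagonal closed set $C=\{x_n\}$ to show the tails form a polar base, and finite intersections $F_n=G_1\cap\cdots\cap G_n$ of a countable base combined with Lemma~\ref{lem:closed-polar} for the converse. The only cosmetic difference is how you show $C$ is closed: you use local finiteness of the tails, whereas the paper argues that a point of $\cl(C)\setminus C$ would lie in every $F_k$, contradicting $\bigcap_k F_k=\emptyset$.
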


\begin{proof}
Assume first that $\cB=\cB_\tau(\langle T_n\rangle)$, and set
\begin{equation}
 F_n=\bigcup_{j=n}^{\infty}T_j.
 \label{eq:tail-sets}
\end{equation}
Although Proposition~\ref{prop:nontrivial} was stated for metrizable spaces, the same argument remains valid in the context of normal $T_2$ spaces. The convergence of $\langle T_n\rangle$ to the empty set implies that the family $\{T_n : n \in \mathbb{N}\}$ is locally finite. Hence each $F_n$ is closed, and $\langle F_n\rangle$ decreases to the empty set. We also have

\begin{equation}
 \cB=\{A\subseteq X:\cl(A)\cap F_n=\emptyset
                      \text{ for some }n\}.
 \label{eq:tail-representation}
\end{equation}
If $A\in\cB$, choose $n$ so that $\cl(A)\cap F_n=\emptyset$.
Normality gives an open set $W$ such that
\[
 \cl(A)\subseteq W\subseteq\cl(W)\subseteq X\setminus F_n.
\]
Then $\cl(W)\in\cB$ and $\cl(A)\subseteq\interior(\cl(W))$, proving
$(\mathrm E_\tau)$.

We claim that
\begin{equation}
 \fD_\tau(\cB)=
 \{H\in\cC(X):F_n\subseteq H\text{ for some }n\}.
 \label{eq:exact-polar}
\end{equation}
Every $F_n$ belongs to the closed polar, which proves one inclusion.  For
the other, take $H\in\fD_\tau(\cB)$ and suppose that
$F_n\nsubseteq H$ for every $n$.  Choose $x_n\in F_n\setminus H$ and
put $C=\{x_n:n\in\N\}$.  We verify that $C$ is closed.  If
$p\in\cl(C)\setminus C$, then the $T_2$ axiom allows us to remove
$x_1,\ldots,x_{k-1}$ from any neighborhood of $p$, and hence
\[
 p\in\cl(\{x_n:n\geq k\})\subseteq F_k
\]
for every $k$.  This contradicts $\bigcap_kF_k=\emptyset$.  Thus
$C$ is closed.  Since $C\subseteq X\setminus H$ and $H$ is in the
polar, $C\in\cB$.  But $C\cap F_n\neq\emptyset$ for every $n$,
contradicting \eqref{eq:tail-representation}.  This proves
\eqref{eq:exact-polar}; in particular, the polar has the countable base
$\langle F_n\rangle$.

Conversely, assume $(2)$.  By Lemma~\ref{lem:closed-polar}, the polar is free
and \eqref{eq:polar-recovery} holds.  If
$\{H_n:n\in\N\}$ is a countable base, put
\[
 F_n=H_1\cap\cdots\cap H_n.
\]
Then $\langle F_n\rangle$ is a decreasing base of nonempty closed sets and
$\bigcap_nF_n=\emptyset$.  It therefore converges to the empty set, and
\eqref{eq:polar-recovery} gives
$\cB=\cB_\tau(\langle F_n\rangle)$.  The final assertion follows from the
same argument.
\end{proof}

The preceding theorem naturally yields an order-theoretic correspondence.

\begin{thm}
    \label{thm:order}
The assignments
\[
 \cB\longmapsto\fD_\tau(\cB),
 \qquad
 \fD\longmapsto\cB_\tau[\fD]
\]
are mutually inverse, inclusion-preserving bijections between the nontrivial
bornologies induced by sequences converging to the empty set and the free countably based closed
filters.
\end{thm}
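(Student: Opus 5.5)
The plan is to verify four things in turn: both maps land in the stated classes, the two composites are identities, and both maps preserve inclusion. Throughout, $(X,\tau)$ is the normal $T_2$ space of this section, and we take both maps to be order-preserving for ordinary set inclusion.

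\textbf{The maps are well defined.} Let $\cB$ be induced by a sequence. By Theorem~\ref{thm:intrinsic}, $\cB$ satisfies $(\mathrm E_\tau)$ and $\fD_\tau(\cB)$ has a countable base. By Lemma~\ref{lem:closed-polar}, $\fD_\tau(\cB)$ is then a free nontrivial closed filter, so it lies in the target class. Conversely, let $\fD$ be a free closed filter with countable base $\{H_n\}$, and put $F_n=H_1\cap\cdots\cap H_n$. Each $F_n$ belongs to $\fD$, so it is a nonempty closed set. Freeness gives $\bigcap_n F_n=\emptyset$. A set $A$ belongs to $\cB_\tau[\fD]$ exactly when $\cl(A)$ misses some member of $\fD$, equivalently when $\cl(A)$ misses some $F_n$. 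Hence $\cB_\tau[\fD]=\cB_\tau(\langle F_n\rangle)$, as in the converse part of the proof of Theorem~\ref{thm:intrinsic}. Since $\langle F_n\rangle$ is decreasing with empty intersection, it converges to the empty set, so this bornology is sequence-induced and nontrivial.

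\textbf{The composites are identities.} For sequence-induced $\cB$, Theorem~\ref{thm:intrinsic} gives $(\mathrm E_\tau)$, so $(\spadesuit)$ of Lemma~\ref{lem:closed-polar} yields $\cB_\tau[\fD_\tau(\cB)]=\cB$. For the other composite, start from $\fD$ and write $\cB=\cB_\tau[\fD]=\cB_\tau(\langle F_n\rangle)$ as above. The claim \eqref{eq:exact-polar} in the proof of Theorem~\ref{thm:intrinsic} was proved for an arbitrary decreasing sequence of closed sets with empty intersection inducing $\cB$; the tail sets of $\langle F_n\rangle$ are the $F_n$ themselves. It therefore gives
\[
\fD_\tau(\cB)=\{H\in\cC(X):F_n\subseteq H\text{ for some }n\}.
\]
The right-hand side equals $\fD$, because the $F_n$ form a base of the filter $\fD$ and a filter is upward closed among closed sets. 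This is the step that needs care: I will state explicitly that the earlier argument uses only $\bigcap F_n=\emptyset$ and the tail representation of $\cB$, so it applies here.

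\textbf{Both maps preserve inclusion.} If $\cB_1\subseteq\cB_2$ and $F\in\fD_\tau(\cB_1)$, then every closed $C$ disjoint from $F$ lies in $\cB_1\subseteq\cB_2$. Hence $F\in\fD_\tau(\cB_2)$, so $\fD_\tau(\cB_1)\subseteq\fD_\tau(\cB_2)$. If $\fD_1\subseteq\fD_2$, then a set whose closure misses some member of $\fD_1$ also misses that member, which lies in $\fD_2$. So $\cB_\tau[\fD_1]\subseteq\cB_\tau[\fD_2]$.

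Since the maps are mutually inverse and each preserves inclusion, each reflects inclusion as well, which completes the order isomorphism. The main obstacle is the second composite. Everything else follows directly from Lemma~\ref{lem:closed-polar} and the definitions.
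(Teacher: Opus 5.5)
Your proposal is correct and follows essentially the paper's own proof. The first composite comes from Theorem~\ref{thm:intrinsic} together with $(\spadesuit)$. For the second composite, you reapply \eqref{eq:exact-polar} to the decreasing base $\langle F_n\rangle$, which is its own tail tower; this packages the same diagonal construction the paper invokes. Your explicit checks of well-definedness and of monotonicity fill in steps the paper leaves as evident.
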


\begin{proof}
Theorem~\ref{thm:intrinsic} and Lemma~\ref{lem:closed-polar} show that the
first composition is the identity.  For the second composition, let $\fD$ be a free countably based closed
filter and choose a decreasing base $\langle F_n\rangle$.  Then
$\cB_\tau[\fD]$ is induced by $\langle F_n\rangle$, and certainly
$\fD\subseteq\fD_\tau(\cB_\tau[\fD])$.  If a closed set $H$ does not
belong to $\fD$, then $F_n\nsubseteq H$ for every $n$.  The diagonal
construction in the proof of Theorem~\ref{thm:intrinsic} produces a closed
set $C\subseteq X\setminus H$ meeting every $F_n$.  Hence
$C\notin\cB_\tau[\fD]$, and $H$ is not in the polar.  The second
composition is therefore the identity.  Both assignments preserve inclusion.
\end{proof}

Notice that, despite the usual order-reversing flavor of the word ``polar,''
this correspondence preserves inclusion: here
$\cB_1\subseteq\cB_2$ implies
$\fD_\tau(\cB_1)\subseteq\fD_\tau(\cB_2)$.
Thus the representation is not merely existential.  The bornology remembers
a unique closed filter, while its possible representing sequences amount to
different choices of decreasing countable bases for that filter.

In the metrizable case, \cite[Theorem~4.1]{Beer2001} identifies the metric
subclass immediately.

\begin{cor}[Regular polar bases]\label{cor:regular-polar}
Let $(X,\tau)$ be metrizable and let $\cB$ be induced by a
sequence converging to the empty set.  Then $\cB$ is a metric bornology if and only if its closed polar
has a decreasing base $\langle F_n\rangle$ such that, for every $k$,
there is $n>k$ for which
\[
 F_n\subseteq\interior(F_k).
\]
\end{cor}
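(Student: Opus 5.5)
Since $\cB$ is induced by a sequence converging to the empty set, it already satisfies $(\mathrm E_\tau)$ by Theorem~\ref{thm:intrinsic}. By \cite[Theorem~4.1]{Beer2001}, or by Hu's Theorem~\ref{thm:Hu}, $\cB$ is therefore a metric bornology exactly when it has a countable base. So the plan is to show that a countable base for $\cB$ exists if and only if the polar has a decreasing base with the stated interior-nesting property. The bridge between the two sides is complementation, $F\mapsto X\setminus\interior(F)$ and $E\mapsto X\setminus\interior(E)$, which already appeared in the proof of Lemma~\ref{lem:closed-polar}.

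\emph{Sufficiency.} Suppose $\langle F_n\rangle$ is a decreasing base of $\fD_\tau(\cB)$ with the nesting property. Passing to a subsequence, we may assume $F_{n+1}\subseteq\interior(F_n)$ for every $n$. Put $B_n=X\setminus\interior(F_n)$. This set is closed and disjoint from $F_{n+1}$, so $B_n\in\cB$ by the definition of the polar. I claim that $\{B_n\}$ is a countable base for $\cB$. Take $A\in\cB$. By \eqref{eq:decreasing-base} there is $n$ with $\cl(A)\cap F_n=\emptyset$. Then $A\subseteq X\setminus F_n\subseteq X\setminus\interior(F_n)=B_n$. Hence $\cB$ has a countable base, and Theorem~\ref{thm:Hu} applies.

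\emph{Necessity.} Suppose $\cB=\cB_d(X)$ for some compatible metric $d$, necessarily unbounded since $\cB$ is nontrivial. Fix $x_0$ and set $F_n=\{x:d(x,x_0)\ge n\}$, exactly as in Proposition~\ref{prop:metric-realization}. Each $F_n$ is closed and nonempty. It belongs to the polar, because any closed $C$ missing $F_n$ lies in the ball of radius $n$ and so $C\in\cB$. It remains to check two things.

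First, the $F_n$ form a base for $\fD_\tau(\cB)$. Let $H$ be in the polar and suppose $F_n\not\subseteq H$ for every $n$. The diagonal argument from the proof of Theorem~\ref{thm:intrinsic} applies, since $\bigcap F_n=\emptyset$. It yields a closed set $C\subseteq X\setminus H$ meeting every $F_n$. Such a $C$ is unbounded, so $C\notin\cB$, contradicting $H\in\fD_\tau(\cB)$.

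Second, the nesting holds with $n=k+1$. Indeed $F_{k+1}\subseteq\{x:d(x,x_0)>k\}$, this set is open, and it is contained in $F_k$; hence $F_{k+1}\subseteq\interior(F_k)$.

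The main obstacle is the base verification in the necessity direction: showing that the metric tails $F_n$ are cofinal in the polar itself, and not merely that they recover $\cB$. The diagonal closed-set construction from Theorem~\ref{thm:intrinsic} is the tool I would use for it, and the equality $\cB_\tau(\langle F_n\rangle)=\cB_d(X)$ from Proposition~\ref{prop:metric-realization} closes the argument.
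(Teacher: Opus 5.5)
Your proof is correct, but it takes a different route from the paper's.

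\textbf{The paper's route.} The paper reduces both directions to the regular-subsequence form of \cite[Theorem~4.1]{Beer2001}. For necessity, the closed tails $E_n=\bigcup_{j\ge n}T_j$ of an inducing sequence form a decreasing polar base by Theorem~\ref{thm:intrinsic}, and Beer's theorem extracts a regularly nested subsequence. For sufficiency, a regularly nested subsequence of the given base induces $\cB$, and Beer's theorem again produces the metric.

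\textbf{Your route.} You use only Hu's countable-base criterion together with $(\mathrm E_\tau)$, and you prove both directions by hand. In sufficiency, the nesting $F_{n+1}\subseteq\interior(F_n)$ is exactly what puts $X\setminus\interior(F_n)=\cl(X\setminus F_n)$ into $\cB$. So a regular polar base becomes a countable closed base of $\cB$. In necessity, the superlevel sets $\{d(\cdot,x_0)\ge n\}$ of a representing metric form a polar base by the diagonal argument. They are regularly nested for free, because $\{d(\cdot,x_0)>k\}$ is open.

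\textbf{What each buys.} Your version is self-contained within the paper. It also makes the mechanism transparent: regular polar bases correspond, via $F\mapsto X\setminus\interior(F)$, to countable closed bases of $\cB$. The paper's version depends on the external theorem. In exchange, it shows directly that the tail tower of \emph{any} inducing sequence has a regular cofinal subsequence, and this fact is reused in the proof of Theorem~\ref{thm:coarsest-supertopology}.

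You can recover that stronger form from your result, because regularity transfers between decreasing bases of the same polar. Suppose $G_a\subseteq E_k$, $G_b\subseteq\interior(G_a)$, and $E_n\subseteq G_b$. Then $E_n\subseteq\interior(E_k)$, and $n$ may be enlarged past $k$ since the $E_n$ decrease.
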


\begin{proof}
Suppose first that $\cB$ is a metric bornology, and choose a
sequence $\langle T_n\rangle$ converging to the empty set and inducing it.  Its closed tails
\[
 E_n=\bigcup_{j=n}^{\infty}T_j
\]
form a decreasing base of $\fD_\tau(\cB)$ by
Theorem~\ref{thm:intrinsic}.  Beer \cite[Theorem~4.1]{Beer2001} shows that
metricity is equivalent to the existence of a subsequence
$\langle E_{n_j}\rangle$ satisfying
\[
 E_{n_{j+1}}\subseteq\interior(E_{n_j})\qquad(j\in\N).
\]
This subsequence is still a base of the polar.

Conversely, suppose the polar has a decreasing base
$\langle F_n\rangle$ with the stated property.  Recursively choose
$n_1<n_2<\cdots$ so that
$F_{n_{j+1}}\subseteq\interior(F_{n_j})$.  This subsequence induces the
same bornology as the original base.  Another application of
\cite[Theorem~4.1]{Beer2001} shows that this bornology is determined by an
unbounded compatible metric.
\end{proof}
At this point, the role of countability is transparent; without countability, the same construction
produces a net.
The net formulation allows us to gain a new understanding of property $(E_\tau)$.

\begin{prop}\label{prop:net}
A nontrivial bornology $\cB$ satisfies $(\mathrm E_\tau)$ if and only if
there is a decreasing net $\langle F_i\rangle_{i\in I}$ of nonempty closed
sets such that
\[
 \bigcap_{i\in I}F_i=\emptyset,
 \qquad
 \cB=\{A\subseteq X:\cl(A)\cap F_i=\emptyset\text{ eventually}\}.
\]
\end{prop}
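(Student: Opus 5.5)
For the forward direction I will take the closed polar itself as the index set. Assume $(\mathrm E_\tau)$. By Lemma~\ref{lem:closed-polar}, $\fD_\tau(\cB)$ is a free nontrivial closed filter and $(\spadesuit)$ holds. Let $I=\fD_\tau(\cB)$, directed by reverse inclusion ($F\preceq G$ iff $G\subseteq F$); this is a directed set because the polar is closed under finite intersections. Put $F_i=i$ for $i\in I$. Then the net is decreasing, every term is nonempty (no member of a nontrivial closed filter is empty), and $\bigcap_i F_i=\bigcap\fD_\tau(\cB)=\emptyset$ by freeness.

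It remains to check the displayed formula. If $\cl(A)\cap F_i=\emptyset$ eventually, then in particular $\cl(A)\cap F=\emptyset$ for some $F$ in the polar, so $A\in\cB_\tau[\fD_\tau(\cB)]=\cB$. Conversely, if $A\in\cB$, then $(\spadesuit)$ gives some $F_{i_0}$ disjoint from $\cl(A)$. Every $i\succeq i_0$ satisfies $F_i\subseteq F_{i_0}$, so $\cl(A)\cap F_i=\emptyset$ eventually. This direction is purely formal.

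For the converse, suppose such a net $\langle F_i\rangle_{i\in I}$ is given. Because the net is decreasing, ``disjoint eventually'' is the same as ``disjoint from some $F_i$''. So $\cB=\{A:\cl(A)\cap F_i=\emptyset\text{ for some }i\}$. I will verify $(\mathrm E_\tau)$ directly, mirroring the last paragraph of the proof of Lemma~\ref{lem:closed-polar}. Given $A\in\cB$, pick $i$ with $\cl(A)\cap F_i=\emptyset$. By normality choose an open $W$ with $\cl(A)\subseteq W\subseteq\cl(W)\subseteq X\setminus F_i$. Then $\cl(W)$ is disjoint from $F_i$, so $\cl(W)\in\cB$, and $\cl(A)\subseteq W\subseteq\interior(\cl(W))$. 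Hence $E=\cl(W)$ witnesses $(\mathrm E_\tau)$.

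I do not need the hypotheses $\bigcap F_i=\emptyset$ and $F_i\neq\emptyset$ in the converse. They only serve to make the family a nontrivial bornology containing the singletons, which is already given by assumption. The only potential obstacle is bookkeeping: one must confirm that the polar, directed by reverse inclusion, really is a directed set and a decreasing net. That follows immediately from the filter property established in Lemma~\ref{lem:closed-polar}. Normality of $X$, assumed throughout this section, is used exactly once, in the converse.
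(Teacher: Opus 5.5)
Your proposal is correct and follows the paper's proof essentially step for step. The forward direction indexes the net by $\fD_\tau(\cB)$ under reverse inclusion and appeals to Lemma~\ref{lem:closed-polar} for freeness and $(\spadesuit)$. The converse uses normality to produce $\cl(W)$ as the witness for $(\mathrm E_\tau)$. Your extra bookkeeping (directedness, nonempty terms, and the converse not needing $\bigcap_i F_i=\emptyset$) is accurate; it only makes explicit what the paper leaves implicit.
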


\begin{proof}
Assume first that $\cB$ satisfies $(\mathrm E_\tau)$.  Take
$I=\fD_\tau(\cB)$, with $F\preceq G$ when $G\subseteq F$, and use
the net whose $F$-th term is $F$.  This is a directed decreasing net.
Moreover, a closed set misses some $F\in\fD_\tau(\cB)$ if and only if it
misses all $G\succeq F$.  Lemma~\ref{lem:closed-polar} now says that the
net has empty intersection and gives the asserted representation.

Conversely, suppose that $\cB$ has such a representation and take
$A\in\cB$.  There is an index $i$ for which
$\cl(A)\cap F_i=\emptyset$.  By normality, choose an open set $W$ with
\[
 \cl(A)\subseteq W\subseteq\cl(W)\subseteq X\setminus F_i.
\]
Since the net is decreasing, $\cl(W)$ also belongs to $\cB$, and it
witnesses $(\mathrm E_\tau)$.
\end{proof}

For a decreasing net, the Kuratowski--Painlev\'e upper limit is
\[
 \bigcap_{i\in I}\cl\left(\bigcup_{j\succeq i}F_j\right)
 =\bigcap_{i\in I}F_i.
\]
Thus the empty-intersection condition in Proposition~\ref{prop:net} is
precisely Kuratowski--Painlev\'e convergence of the net to the empty set.

\section{The relatively compact bornology}

We return to the setting of noncompact metrizable spaces for a principal
result of the article. As usual, let $\langle T_n \rangle$  be a sequence of nonempty closed sets convergent to the empty set.   If $C$ is relatively compact, then the terms of $\langle T_n \rangle$ miss $\cl(C)$ eventually, and so by definition,  $C$ belongs to $\mathcal{B}(\langle T_n \rangle)$. Minimally, then, $\mathcal{B}(\langle T_n \rangle)$ contains the bornology of relatively compact subsets $\mathcal{K}_\tau(X)$.  This brings up a basic question:  in exactly which metrizable spaces does there exist a sequence $\langle T_n \rangle$ for which $\mathcal{K}_\tau(X) = \mathcal{B}(\langle T_n \rangle)$?

The intrinsic sequence criterion, Theorem~\ref{thm:intrinsic}, already suggests the answer.  We first give a direct
proof because it displays the two obstructions separately: the closed tails
force local compactness, while their shrinking neighborhoods force
$\sigma$-compactness and hence separability.  We need only the following
elementary lemma.

\begin{lem}\label{lem:shrinking-neighborhoods}
Let $d$ be a compatible metric for the noncompact metrizable space $(X,\tau)$, and let
$\langle E_n\rangle$ be a decreasing sequence of nonempty closed sets with
empty intersection.  Put
\[
 V_n=\{x\in X:d(x,E_n)<\frac1n\}.
\]
Then $\bigcap_nV_n=\emptyset$.
\end{lem}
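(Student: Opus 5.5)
We argue by contradiction: we assume some point $x\in X$ lies in every $V_n$, and show that $x$ lies in every $E_k$. That contradicts $\bigcap_n E_n=\emptyset$.

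First we fix the consequence of the assumption. Since $x\in V_n$ for each $n$, we have $d(x,E_n)<\frac1n$ for every $n$.

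Next we use the fact that the sets decrease. Fix an index $k$. For every $n\geq k$ we have $E_n\subseteq E_k$, so
\[
 d(x,E_k)\leq d(x,E_n)<\frac1n .
\]
Letting $n\to\infty$ gives $d(x,E_k)=0$.

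Finally we use closedness. Because $E_k$ is closed and $d$ is compatible with $\tau$, the condition $d(x,E_k)=0$ means $x\in\cl(E_k)=E_k$. As $k$ was arbitrary, $x\in\bigcap_k E_k=\emptyset$, which is absurd. Hence $\bigcap_n V_n=\emptyset$.

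There is no genuine obstacle here. The only step that needs care is the monotonicity step: it is what turns the bound $\frac1n$ at the varying index $n$ into a bound on the fixed set $E_k$. Note that nonemptiness of the $E_n$ is not needed for this conclusion; it only ensures that $d(\cdot,E_n)$ is finite, so that $V_n$ is well defined as an open neighborhood of $E_n$. Noncompactness of $X$ plays no role in the argument.
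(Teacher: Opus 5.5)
Your proof is correct and is essentially the paper's argument. The paper picks $e_n\in E_n$ with $d(e_n,v)<\frac1n$ and uses that each closed $E_k$ contains a tail of the convergent sequence $\langle e_n\rangle$. That is the sequential form of your observation that $d(x,E_k)\leq d(x,E_n)<\frac1n$ for $n\geq k$ forces $d(x,E_k)=0$.
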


\begin{proof}
If $v\in\bigcap_nV_n$, choose $e_n\in E_n$ with
$d(e_n,v)<\frac1n$.  Then the sequence $\langle e_n\rangle $ converges to $v$.  Each $E_n$ contains a tail of this
sequence and is closed, so $v\in E_n$ for every $n$. We obtain a contradiction, and the claim follows.
\end{proof}

We recall the following result of Vaughan \cite{Vaughan1937} from 90 years ago: a metrizable space has a compatible metric for which closed and bounded sets are compact if and only if its topology is both locally compact and separable.  For a more inclusive result, we refer the reader to \cite[Theorem~15.4]{Beer2023}.

\begin{thm}\label{lion}  Let $(X,\tau)$ be a noncompact metrizable space. Then the space admits a sequence of nonempty closed sets $\langle T_n \rangle$ convergent to the empty set for which $\mathcal{K}_\tau(X) = \mathcal{B}(\langle T_n \rangle)$ if and only if $(X,\tau)$ is both locally compact and separable.
\end{thm}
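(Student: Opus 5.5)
For sufficiency I would invoke Vaughan's theorem. If $(X,\tau)$ is locally compact and separable, choose a compatible metric $d$ for which closed and bounded sets are compact. Since $X$ is noncompact, $d$ must be unbounded: otherwise $X$ itself would be closed and bounded. Proposition~\ref{prop:metric-realization} then yields a decreasing sequence $\langle T_n\rangle$ in $\cC_0(X)$, convergent to the empty set, with $\cB(\langle T_n\rangle)=\cB_d(X)$. By the choice of $d$, a set is $d$-bounded exactly when its closure is compact, so $\cB_d(X)=\cK_\tau(X)$.

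For necessity, suppose $\cK_\tau(X)=\cB(\langle T_n\rangle)$ and let $E_n=\bigcup_{j\ge n}T_j$ be the closed tails. By Proposition~\ref{prop:nontrivial} each $E_n$ is closed, and the $E_n$ decrease to the empty set. As in the proof of Theorem~\ref{thm:intrinsic}, a set $A$ belongs to $\cB(\langle T_n\rangle)$ iff $\cl(A)$ misses some $E_n$. I would then verify two properties separately.

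\emph{Local compactness.} Fix $x\in X$. Since $\bigcap_n E_n=\emptyset$, some $E_n$ omits $x$. By regularity, choose an open $W\ni x$ with $\cl(W)\cap E_n=\emptyset$. Then $\cl(W)\in\cB(\langle T_n\rangle)=\cK_\tau(X)$, so $\cl(W)$ is a compact neighborhood of $x$.

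\emph{Separability.} Fix a compatible metric $d$ and set $V_n=\{x:d(x,E_n)<1/n\}$. By Lemma~\ref{lem:shrinking-neighborhoods}, $\bigcap_n V_n=\emptyset$, so the closed sets $K_n=X\setminus V_n$ cover $X$. Each $K_n$ is at positive distance from $E_n$, hence disjoint from $E_n$, so $K_n\in\cB(\langle T_n\rangle)=\cK_\tau(X)$. Being closed, $K_n$ is compact. Thus $X$ is $\sigma$-compact, and a $\sigma$-compact metrizable space is separable because each compact metric space is separable. This gives local compactness and separability together.

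I expect the main obstacle to be the $\sigma$-compactness step. The closed tails alone do not exhaust $X$ by sets whose closures miss some tail, since $X\setminus E_n$ need not have closure disjoint from $E_n$. The shrinking metric neighborhoods of Lemma~\ref{lem:shrinking-neighborhoods} are what repair this. The rest is routine once Vaughan's theorem and Proposition~\ref{prop:metric-realization} are in hand; the only care needed there is noting that noncompactness forces the Vaughan metric to be unbounded.
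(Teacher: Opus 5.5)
Your proposal is correct and follows essentially the same route as the paper. For sufficiency you use Vaughan's metric with the tail sets $\{x:d(x,x_0)\geq n\}$, which you obtain from Proposition~\ref{prop:metric-realization} rather than writing them out. For necessity you take a neighborhood whose closure misses a tail to get local compactness, and the shrinking neighborhoods of Lemma~\ref{lem:shrinking-neighborhoods} to get $\sigma$-compactness. The only cosmetic difference is that you produce that neighborhood by regularity, where the paper uses a small closed ball.
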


\begin {proof}  Necessity is the harder part.  Suppose $\langle T_n \rangle$ in $\mathcal{C}_0(X)$ converges to the empty set and $\mathcal{K}_\tau(X) = \mathcal{B}(\langle T_n \rangle)$.  Let $p \in X$ be arbitrary. Since $\textrm{Ls}(T_n) = \emptyset$, there exists $n \in \mathbb{N}$ such that $p \notin \cl(\bigcup_{j=n}^\infty T_j)$. Fix a metric $d$ compatible with $\tau$. We can choose $\delta > 0$ such that

$$\{x \in X: d(x,p) \leq \delta\} \cap \cl(\bigcup_{j=n}^\infty T_j) = \emptyset.$$ \vspace {2 mm}

\noindent This implies that $\forall j \geq n, \ \{x \in X: d(x,p) \leq \delta\}\cap T_j = \emptyset$ and since closed balls are closed sets, by definition, $\{x \in X: d(x,p) \leq \delta\} \in \mathcal{B}(\langle T_n \rangle)$.  By assumption, $\{x \in X: d(x,p) \leq \delta\}$ must be relatively compact, and so $p$ has a compact neighborhood. This gives local compactness for the space.

To show separability, it is enough to show $\sigma$-compactness, since a countable union of separable subspaces is separable.  For each $n$, set
\[
 E_n=\cl\left(\bigcup_{j=n}^{\infty}T_j\right)
 \quad\text{and}\quad
 V_n=\left\{x\in X:d(x,E_n)<\frac1n\right\}.
\]
By Lemma~\ref{lem:shrinking-neighborhoods}, $\bigcap_{n=1}^{\infty}V_n=\emptyset$.  Since $E_n\subseteq V_n$, the closed set $X\setminus V_n$ belongs to $\mathcal{B}(\langle T_n\rangle)=\mathcal{K}_\tau(X)$, and is therefore compact.  De Morgan's laws now give
\[
 X=\bigcup_{n=1}^{\infty}(X\setminus V_n),
\]
so $X$ is $\sigma$-compact and hence separable.

For sufficiency, assuming local compactness and separability, Vaughan's theorem allows us to equip $X$ with a compatible metric $d$ for which closed and bounded subsets are compact. Since $X$ is closed but not compact, the metric space $(X,d)$ is not bounded.  Fixing $x_0 \in X$, the decreasing sequence of nonempty closed sets $\langle T_n \rangle$ where $T_n := \{x \in X: d(x,x_0) \geq n\}$ clearly converges to the empty set because for each $n \in \mathbb{N}, \cl(\bigcup_{j=n}^\infty T_j) = T_n$.  Now if $A \in \mathcal{B}(\langle T_n \rangle)$, there exists $n$ such that
$\cl(A) \cap T_n = \emptyset$.  This means $\cl(A)$ is closed and bounded and so it is compact. By definition
$A$ must be relatively compact.  We have shown that $\mathcal{B}(\langle T_n \rangle) \subseteq \mathcal{K}_\tau(X)$.  The reverse inclusion always holds.  \end {proof} \vspace {4 mm}

The theorem asks when the universal lower bound is attained by one sequence.
Even when it is not attained, it remains exact after all sequence-induced
bornologies are intersected.

\begin{cor}\label{cor:least-induced}
For every noncompact metrizable space $(X,\tau)$,
\begin{equation}
 \cK_\tau(X)=
 \bigcap\{\cB_\tau(\langle T_n\rangle):
          \langle T_n\rangle\text{ converges to the empty set}\}.
 \label{eq:toad}
\end{equation}
Consequently, the sequence-induced bornologies have a least member if and
only if $X$ is locally compact and separable.  In that case the least member
is $\cK_\tau(X)$.
\end{cor}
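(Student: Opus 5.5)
The containment $\cK_\tau(X)\subseteq\cB_\tau(\langle T_n\rangle)$ for every sequence converging to the empty set was observed at the start of this section, so the right side of \eqref{eq:toad} contains $\cK_\tau(X)$. For the reverse inclusion, I would take $A\notin\cK_\tau(X)$ and build one sequence whose induced bornology excludes $A$.

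Since $\cl(A)$ is a closed, noncompact subset of a metrizable space, it is not sequentially compact. Hence there is a sequence $\langle a_n\rangle$ in $\cl(A)$ with no convergent subsequence. Passing to a subsequence, I may assume the $a_n$ are distinct and the set $\{a_n:n\in\N\}$ has no cluster point in $X$. Put $T_n=\{a_n\}$. Every compact set contains only finitely many $a_n$, since otherwise a subsequence would converge. So $T_n$ misses each compact set eventually, and by condition (3) of the preliminaries $\langle T_n\rangle$ converges to the empty set. On the other hand, $\cl(A)$ meets every $T_n$, so $A\notin\cB_\tau(\langle T_n\rangle)$. This proves \eqref{eq:toad}.

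For the "consequently" clause, first suppose the sequence-induced bornologies have a least member $\cB_\tau(\langle S_n\rangle)$. Then it is contained in every $\cB_\tau(\langle T_n\rangle)$, hence in the intersection, which is $\cK_\tau(X)$ by \eqref{eq:toad}. The reverse containment always holds, so $\cK_\tau(X)=\cB_\tau(\langle S_n\rangle)$. Theorem~\ref{lion} then gives that $X$ is locally compact and separable. Conversely, if $X$ is locally compact and separable, Theorem~\ref{lion} supplies a sequence inducing $\cK_\tau(X)$. This bornology lies below every sequence-induced bornology, so it is the least member.

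The only step requiring care is the construction of the sequence with no convergent subsequence. Here metrizability enters, through the equivalence of compactness and sequential compactness. Choosing singletons keeps the convergence check trivial.
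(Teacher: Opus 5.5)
Your proof is correct and follows the paper's argument: you use the same singleton sequence $T_n=\{a_n\}$ built from a sequence with no convergent subsequence, and you reduce the second clause to Theorem~\ref{lion} in the same way. The only difference is that you take the $a_n$ directly in $\cl(A)$, whereas the paper first picks $y_n\in\cl(A)$ and then approximates them by points $a_n\in A$; since membership in $\cB_\tau(\langle T_n\rangle)$ is tested against $\cl(A)$, your shortcut is legitimate and slightly simpler.
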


\begin{proof}
Inclusion of the left-hand side in the right-hand side has already been observed.  If $A\notin\cK_\tau(X)$, the noncompact
metric space $\cl(A)$ contains a sequence $\langle y_n\rangle$ with no
convergent subsequence.  Fix a compatible metric $d$ and choose $a_n\in A$
with $d(a_n,y_n)<\frac1n$.  A convergent subsequence of $\langle a_n\rangle$
would give a convergent subsequence of $\langle y_n\rangle$.  Thus, after
passing to a subsequence, the points $a_n$ are distinct and have no cluster
point in $X$.  Then
$T_n=\{a_n\}$ converges to the empty set, while $\cl(A)$ meets every $T_n$.  Hence
$A\notin\cB_\tau(\langle T_n\rangle)$, proving
\eqref{eq:toad}.

Any least sequence-induced bornology would have to equal the intersection in
\eqref{eq:toad}.  The final statement now follows from
Theorem~\ref{lion}.
\end{proof}

We now return to the closed polar to see
why the two hypotheses in Theorem~\ref{lion} are inevitable and to obtain a
cardinal refinement.  We first identify the polar explicitly.

\begin{prop}

\label{prop:polar-compact}
For every closed set $F\subseteq X$,
\begin{equation}
 F\in\fD_\tau(\cK_\tau(X))
 \quad\Longleftrightarrow\quad
 X\setminus F\text{ is relatively compact}.
 \label{eq:compact-polar}
\end{equation}
\end{prop}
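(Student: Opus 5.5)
The proof will be a direct unwinding of Definition~\ref{def:closed-polar} with $\cB=\cK_\tau(X)$; no recovery lemma is needed. The claim is: a closed set $F$ lies in the polar exactly when every closed $C$ disjoint from $F$ is compact. Note that $\cK_\tau(X)$ is nontrivial because $X$ is noncompact, so the polar is defined.

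For the direction ($\Leftarrow$), assume $X\setminus F$ is relatively compact, so $K:=\cl(X\setminus F)$ is compact. Let $C$ be closed with $C\cap F=\emptyset$. Then $C\subseteq X\setminus F\subseteq K$. Hence $C$ is a closed subset of a compact set, so it is compact, and in particular $C\in\cK_\tau(X)$. Therefore $F\in\fD_\tau(\cK_\tau(X))$.

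For the direction ($\Rightarrow$), assume $F$ is in the polar, and suppose for contradiction that $\cl(X\setminus F)$ is not compact. I will use the same device as in the proof of Corollary~\ref{cor:least-induced}, which is the only step needing care.

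First, the noncompact closed set $\cl(X\setminus F)$ contains a sequence with no convergent subsequence. Approximating each of its terms within $1/n$ by points of the open set $X\setminus F$, and passing to a subsequence, gives distinct points $a_n\in X\setminus F$ with no cluster point in $X$. Consequently $C=\{a_n:n\in\N\}$ is closed and infinite, and it is not compact, since it is infinite, closed and has no cluster point.

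Second, $C$ is disjoint from $F$ because every $a_n$ lies in $X\setminus F$. Since $F$ is in the polar, this forces $C\in\cK_\tau(X)$, so $C$ would be compact. This contradicts the first step, and so $X\setminus F$ is relatively compact.

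There is also a shortcut for ($\Rightarrow$) when $X$ is locally compact, though I will not rely on it because local compactness is not assumed. Suppose $\cl(X\setminus F)\cap F=\partial F$. One can cover the part of $X\setminus F$ near $\partial F$ by closed sets avoiding $F$. The diagonal argument above avoids this issue, because it works directly with points of the open set $X\setminus F$ and never needs closed sets near $\partial F$.
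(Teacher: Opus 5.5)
Your proof is correct and matches the paper's argument essentially step for step. One direction is the observation that a closed set missing $F$ lies in the compact set $\cl(X\setminus F)$; for the other, you approximate a sequence in $\cl(X\setminus F)$ with no convergent subsequence by distinct points of $X\setminus F$, which produces a closed noncompact set disjoint from $F$. Your closing ``shortcut'' paragraph is unnecessary and can be dropped; note also that for closed $F$ the identity $\cl(X\setminus F)\cap F=\partial F$ always holds, so it should not be stated as a supposition.
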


\begin{proof}
Put $U=X\setminus F$.  If $\cl(U)$ is compact, every subset of $U$
that is closed in $X$ is compact, so $F$ is in the polar.

Conversely, suppose that $U$ is not relatively compact.  Fix a compatible
metric $d$.  The noncompact metric space $\cl(U)$ contains a sequence
$\langle y_n\rangle$ with no convergent subsequence.  Choose
$x_n\in U$ with $d(x_n,y_n)<\frac1n$, passing to a subsequence so that the
points are distinct.  Any convergent subsequence of $\langle x_n\rangle$
would give a convergent subsequence of $\langle y_n\rangle$.  Consequently
$C=\{x_n:n\in\N\}$ is a closed noncompact
subset of $U$.  Thus $C\notin\cK_\tau(X)$, and $F$ is not in the
polar.
\end{proof}

In particular,
\begin{equation}
\fD_\tau(\cK_\tau(X))
 =\{X\setminus U:U\text{ is open and relatively compact}\}.
 \label{eq:cocompact-polar}
\end{equation}

The intersection of all members of this polar is the set of points having no relatively compact open neighborhood.

\begin{prop}\label{prop:compact-polar-character}
Let $(X,\tau)$ be a noncompact locally compact metrizable space. Then
\begin{equation}
 \chi(\fD_\tau(\cK_\tau(X)))=d(X)=w(X),
 \label{eq:character-density}
\end{equation}
where $d(X)$ and $w(X)$ denote the density and weight of $X$,
respectively.
\end{prop}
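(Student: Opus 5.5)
We prove the chain $d(X)\le\chi(\fD_\tau(\cK_\tau(X)))\le w(X)$ and close it with the classical equality $d(X)=w(X)$ for metrizable spaces. Throughout we use the description \eqref{eq:cocompact-polar}: the polar consists exactly of the complements of open relatively compact sets. We also note that $X$ is infinite, since it is noncompact, so $d(X)$ and $w(X)$ are infinite cardinals.

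\emph{Upper bound.} Fix a base $\mathcal W$ for $\tau$ with $|\mathcal W|=w(X)$. Let $\mathcal W_0\subseteq\mathcal W$ consist of the members with compact closure. Local compactness together with regularity shows that $\mathcal W_0$ is still a base: each point has a compact neighborhood, and any member of $\mathcal W$ lying inside its interior has compact closure. Let $\mathcal V$ be the family of finite unions of members of $\mathcal W_0$. Then $|\mathcal V|\le w(X)$ because $w(X)$ is infinite, and every $V\in\mathcal V$ is open and relatively compact. Since $X$ is noncompact, $X\setminus V\neq\emptyset$, so $X\setminus V$ lies in the polar. Now let $U$ be open and relatively compact. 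The compact set $\cl(U)$ is covered by finitely many members of $\mathcal W_0$, and their union $V\in\mathcal V$ satisfies $X\setminus V\subseteq X\setminus U$. Hence $\{X\setminus V:V\in\mathcal V\}$ is a base for the polar, and $\chi\le w(X)$.

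\emph{Lower bound.} Let $\{X\setminus U_i:i\in I\}$ be a base for the polar, where each $U_i$ is open and relatively compact; such a representation is possible by \eqref{eq:cocompact-polar}. Take $x\in X$. By local compactness $x$ has an open relatively compact neighborhood $O$, so $X\setminus O$ lies in the polar. Therefore $X\setminus U_i\subseteq X\setminus O$ for some $i$, which gives $x\in O\subseteq U_i$. Thus $X=\bigcup_{i\in I}U_i$.
\begin{itemize}
\item If $I$ were finite, $X$ would be contained in a finite union of compact sets $\cl(U_i)$ and hence would be compact, a contradiction. So $I$ is infinite.
\item Each $U_i$ is a subspace of the compact metrizable space $\cl(U_i)$, so it is separable. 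Choose a countable dense $D_i\subseteq U_i$.
\item The set $\bigcup_{i\in I}D_i$ is dense in $X$ and has cardinality at most $|I|\cdot\aleph_0=|I|$.
\end{itemize}
Hence $d(X)\le|I|$. Minimizing over all bases gives $d(X)\le\chi$.

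\emph{Conclusion.} For metrizable spaces $w(X)=d(X)$ is standard: a dense set $D$ of size $d(X)$ yields the base of balls $B(x,1/n)$ with $x\in D$ and $n\in\N$. Combining this with the two bounds gives \eqref{eq:character-density}. The only point needing care is the cardinal bookkeeping. The finite-union closure in the upper bound stays of size $w(X)$ only because $w(X)$ is infinite. In the lower bound we must rule out a finite index set, and this is exactly where noncompactness enters. Neither step is a serious obstacle.
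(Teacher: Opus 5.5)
Your proof is correct and follows the paper's argument. The lower bound $d(X)\le\chi$ is identical: the cover by the sets $U_i$, noncompactness to exclude a finite base, and countable dense subsets coming from the compact closures. Your upper bound, via finite unions of base members with compact closure, is only a cosmetic variant of the paper's finite unions of a subcover of cardinality at most $w(X)$, and closing the chain with $w(X)\le d(X)$ for metrizable spaces is what the paper does implicitly.
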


\begin{proof}
Let $\{F_\alpha:\alpha<\lambda\}$ be a base of the polar and
put $U_\alpha=X\setminus F_\alpha$.  The sets $U_\alpha$ cover $X$.
Indeed, if $V$ is a relatively compact open neighborhood of $x$, some
$F_\alpha$ is contained in $X\setminus V$, and then
$x\in V\subseteq U_\alpha$.  A finite polar base would make $X$ a finite
union of relatively compact sets and hence compact, so $\lambda$ is
infinite.  Each $\cl(U_\alpha)$ is compact and metrizable, hence separable.
Choose a countable dense subset $D_\alpha$ of each closure.  Since the
sets $U_\alpha$ cover $X$, the union of the $D_\alpha$'s is dense in
$X$, and its cardinality is at most $\lambda$.  Thus $d(X)\leq\lambda$.

For the reverse inequality, let $\kappa=d(X)=w(X)$.  From the cover of
$X$ by relatively compact open sets, choose a subcover
$\{V_\alpha:\alpha<\lambda\}$ with $\lambda\leq\kappa$; every open cover
has a subcover of cardinality at most the weight.  The finite
unions of the $V_\alpha$'s form a family of cardinality at most $\kappa$
that is cofinal among relatively compact open subsets: compactness of
$\cl(U)$ gives a finite covering whenever $U$ is relatively compact and
open.  Taking complements and using Proposition~\ref{prop:polar-compact} gives a
polar base of cardinality at most $\kappa$.  This proves
\eqref{eq:character-density}.
\end{proof}

Taking complements in \eqref{eq:cocompact-polar} shows that polar bases
correspond to cofinal families of relatively compact open sets.  Since
$\cK_\tau(X)$ has an open base when $X$ is locally compact,
\[
 \chi(\fD_\tau(\cK_\tau(X)))
 =\operatorname{cof}(\cK_\tau(X)),
\]
where $\operatorname{cof}(\cK_\tau(X))$ denotes the least cardinality of a base for the bornology $\cK_\tau(X)$.
For noncompact locally compact metrizable spaces, Eshed et al.\ proved that
the cardinal on the right equals $d(X)$
\cite[Theorem~4.8(1)]{EshedEtAl2020}.  Thus the numerical equality in
\eqref{eq:character-density} is already implicit in their result; the direct
closed-polar interpretation is the form needed to determine representability by a sequence that converges to the empty set.

Theorem~\ref{lion} now also follows from
Theorem~\ref{thm:intrinsic}, the standard equivalence between local
compactness of $X$ and the enlargement property for $\cK_\tau(X)$, and
Proposition~\ref{prop:compact-polar-character}: countable character of the
polar is equivalent to separability.

Theorem~\ref{lion} tells us when some sequence works.  In applications,
however, one is often given a particular sequence.  The same machinery says
exactly when that sequence realizes the relatively compact bornology.

\begin{thm}\label{thm:fixed-sequence}
Let $\langle T_n\rangle$ be a sequence of nonempty closed sets converging to the empty set, and put
\begin{equation}
 F_n=\bigcup_{j=n}^{\infty}T_j,
 \qquad
 U_n=X\setminus F_n.
 \label{eq:fixed-tails}
\end{equation}
Then
\begin{equation}
 \cB_\tau(\langle T_n\rangle)=\cK_\tau(X)
 \quad\Longleftrightarrow\quad
 U_n\text{ is relatively compact for every }n.
 \label{eq:fixed-criterion}
\end{equation}
When these conditions hold, $\langle U_n\rangle$ is an increasing cofinal
open cover of $X$ by relatively compact sets, and
$\langle F_n\rangle$ is a decreasing base of
$\fD_\tau(\cK_\tau(X))$.
\end{thm}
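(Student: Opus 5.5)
We will prove Theorem~\ref{thm:fixed-sequence} by routing everything through the closed polar. From the proof of Theorem~\ref{thm:intrinsic}, the $F_n$ are closed, decrease to the empty set, and form a decreasing base of $\fD_\tau(\cB_\tau(\langle T_n\rangle))$; this is equation \eqref{eq:exact-polar}. Proposition~\ref{prop:polar-compact} says that a closed set $F$ lies in $\fD_\tau(\cK_\tau(X))$ exactly when $X\setminus F$ is relatively compact. The equivalence then reduces to comparing the two polars.

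For the forward direction, assume $\cB_\tau(\langle T_n\rangle)=\cK_\tau(X)$. Then the two polars coincide, since the polar depends only on the bornology. Each $F_n$ lies in the polar of $\cB_\tau(\langle T_n\rangle)$, hence in $\fD_\tau(\cK_\tau(X))$. Proposition~\ref{prop:polar-compact} therefore gives that $U_n=X\setminus F_n$ is relatively compact.

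For the reverse direction, assume every $U_n$ is relatively compact. The inclusion $\cK_\tau(X)\subseteq\cB_\tau(\langle T_n\rangle)$ always holds, as noted at the start of this section. Conversely, let $A\in\cB_\tau(\langle T_n\rangle)$. By \eqref{eq:tail-representation}, $\cl(A)\cap F_n=\emptyset$ for some $n$, so $\cl(A)\subseteq U_n$. Then $\cl(A)$ is a closed subset of the compact set $\cl(U_n)$, hence compact, and $A\in\cK_\tau(X)$. This direct argument avoids Theorem~\ref{thm:order} altogether.

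For the final assertions, the $U_n$ are open because the $F_n$ are closed, and they increase because the $F_n$ decrease. They cover $X$ because $\bigcap_n F_n=\emptyset$. For cofinality, let $K\subseteq X$ be compact (or let $U$ be relatively compact, and take $K=\cl(U)$). Since the $U_n$ form an increasing open cover, $K\subseteq U_n$ for some $n$. Finally, $\langle F_n\rangle$ is a decreasing base of $\fD_\tau(\cB_\tau(\langle T_n\rangle))$, which equals $\fD_\tau(\cK_\tau(X))$ under the equivalence just proved. One can also check this directly: for $F$ in the polar, $\cl(X\setminus F)$ is compact, so it lies in some $U_n$, and hence $F_n\subseteq F$.

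The only delicate point is justifying that the tail sets $F_n$ are closed and that \eqref{eq:exact-polar} applies. We take both from Proposition~\ref{prop:nontrivial} and the proof of Theorem~\ref{thm:intrinsic}, which holds since metrizable spaces are normal. Everything else is bookkeeping.
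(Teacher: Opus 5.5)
Your proposal is correct and follows essentially the same route as the paper. The forward direction places each $F_n$ in $\fD_\tau(\cK_\tau(X))$ and applies Proposition~\ref{prop:polar-compact}, the reverse direction uses $\cl(A)\subseteq U_n$ together with compactness of $\cl(U_n)$, and the cofinality and base claims come from the increasing open cover $\langle U_n\rangle$; the only cosmetic differences are that you cite \eqref{eq:exact-polar} and the general inclusion $\cK_\tau(X)\subseteq\cB_\tau(\langle T_n\rangle)$, where the paper argues both points directly.
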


\begin{proof}
The tail sets $F_n$ are closed and decrease to the empty set; hence the
sets $U_n$ form an increasing open cover of $X$.  Also,
\begin{equation}
 A\in\cB_\tau(\langle T_n\rangle)
 \quad\Longleftrightarrow\quad
 \cl(A)\cap F_n=\emptyset\text{ for some }n.
 \label{eq:fixed-membership}
\end{equation}

Suppose first that the two bornologies are equal.  Every subset
$C\subseteq U_n$ that is closed in $X$ misses $F_n$, and therefore is
compact.  Proposition~\ref{prop:polar-compact} shows that $U_n$ is relatively
compact.

Conversely, suppose that every $U_n$ is relatively compact.  If
$A$ belongs to the induced bornology, then
$\cl(A)\subseteq U_n$ for some $n$, so $\cl(A)$ is compact.  If
$A\in\cK_\tau(X)$, the compact set $\cl(A)$ is covered by the increasing
family $\langle U_n\rangle$, and is therefore contained in one of its
members.  Equation \eqref{eq:fixed-membership} gives the reverse inclusion.

The same compactness argument proves cofinality of the exhaustion.  Finally,
if $F$ belongs to the polar, then $X\setminus F$ is relatively compact
and lies in some $U_N$.  Equivalently, $F_N\subseteq F$, which proves
that $\langle F_n\rangle$ is a polar base.
\end{proof}

\begin{cor}\label{cor:regular-exhaustion}
Under the equivalent conditions of Theorem~\ref{thm:fixed-sequence}, there
are strictly increasing indices $n_1<n_2<\cdots$ such that
\[
 \cl(U_{n_k})\subseteq U_{n_{k+1}},
 \qquad\text{equivalently}\qquad
 F_{n_{k+1}}\subseteq\interior(F_{n_k}).
\]
Thus every closed-tail representation of $\cK_\tau(X)$ has a regular
cofinal subsequence.
\end{cor}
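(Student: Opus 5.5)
The plan is to build the indices recursively, using only two facts from Theorem~\ref{thm:fixed-sequence}: each $U_n$ has compact closure, and $\langle U_n\rangle$ is an increasing open cover of $X$. Start with $n_1=1$. Suppose $n_k$ has been chosen. Then $\cl(U_{n_k})$ is compact and is covered by the increasing open family $\langle U_n\rangle$, so a single member already contains it. Choose $n_{k+1}>n_k$ with
\[
 \cl(U_{n_k})\subseteq U_{n_{k+1}},
\]
which is possible because the family increases, so once some index works every larger one does too. This is the same compactness argument that gave cofinality in Theorem~\ref{thm:fixed-sequence}.

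Next comes the equivalence between the two displayed inclusions, which is just complementation. By \eqref{eq:fixed-tails}, $F_n=X\setminus U_n$, so
\[
 \interior(F_{n_k})=X\setminus\cl(U_{n_k}).
\]
Hence $\cl(U_{n_k})\subseteq U_{n_{k+1}}$ holds exactly when $X\setminus U_{n_{k+1}}\subseteq X\setminus\cl(U_{n_k})$, that is, exactly when $F_{n_{k+1}}\subseteq\interior(F_{n_k})$.

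For the final sentence, note that any closed-tail representation of $\cK_\tau(X)$ satisfies the hypotheses of Theorem~\ref{thm:fixed-sequence}. The chosen subsequence $\langle F_{n_k}\rangle$ is cofinal in the decreasing sequence $\langle F_n\rangle$. It therefore remains a decreasing base of $\fD_\tau(\cK_\tau(X))$ and induces the same bornology, which is the sense in which it is a regular cofinal subsequence.

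No step is a real obstacle. The only point needing care is that a single index $n_{k+1}$ exceeding $n_k$ suffices, and that follows from the monotonicity of $\langle U_n\rangle$ together with the compactness of $\cl(U_{n_k})$. The result is also consistent with Corollary~\ref{cor:regular-polar}, since $\cK_\tau(X)$ is then a metric bornology.
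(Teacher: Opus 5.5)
Your proof is correct and follows the paper's argument. You recursively use compactness of $\cl(U_{n_k})$ and the increasing open cover $\langle U_n\rangle$ to get a later index with $\cl(U_{n_k})\subseteq U_{n_{k+1}}$, then take complements via $\interior(F_{n_k})=X\setminus\cl(U_{n_k})$, and your added remarks on cofinality and on the subsequence inducing the same bornology are accurate.
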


\begin{proof}
Having chosen $n_k$, compactness of $\cl(U_{n_k})$ and the increasing
open cover $\langle U_n\rangle$ give $m$ such that
$\cl(U_{n_k})\subseteq U_m$.  Choose
$n_{k+1}>\max\{n_k,m\}$ and take complements.
\end{proof}

Thus the sequences representing $\cK_\tau(X)$ are precisely those whose
closed tails are the complements of a cofinal relatively compact open
exhaustion.  After passage to a subsequence, every such exhaustion has the
familiar regular nesting property.

\section{A counterexample to a 25-year-old conjecture}

Until now the topology of $X$ has remained fixed.  The final question of
\cite{Beer2001} changes the rules: the topology may be replaced by a metrizable supertopology in an attempt
to make the induced bornology metric.  Since successful supertopologies exist in
many natural situations, it is tempting to expect a least one.  The ordinary
sequence of positive integers in the real line shows that this expectation is
false.

Notice that every supertopology of $\tau$ preserves $\tau$-convergence to the empty set:
the terms remain closed and the closures of their tails can only decrease.
The final question of \cite{Beer2001} asks whether, for every metrizable
$(X,\tau)$ and every sequence of nonempty closed sets that is $\tau$-convergent to the empty set,
there is a coarsest metrizable supertopology $\sigma$ of $\tau$ for which
$\cB_\sigma(\langle T_n\rangle)$ is a nontrivial metric bornology.  We now
give a negative answer.

\begin{thm}\label{thm:counterexample}
Let $X=\mathbb R$ with its usual topology $\tau$, and let
$T_n=\{n\}$ for $n\in\N$.  There is no coarsest metrizable supertopology
$\sigma$ of $\tau$ for which
$\cB_\sigma(\langle T_n\rangle)$ is a nontrivial metric bornology.
\end{thm}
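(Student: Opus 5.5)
The plan is to argue by contradiction. The abstract says we should construct a decreasing family of successful metrizable supertopologies whose intersection is $\tau$. If a coarsest successful supertopology $\sigma_0$ existed, it would be contained in every member of that family, hence in their intersection $\tau$. So $\sigma_0=\tau$. But $\tau$ itself is not successful, because $\cB_\tau(\langle\{n\}\rangle)$ has no countable base (Example~\ref{ex:integer-bornology}, \cite{Beer2001}). The proof therefore has three steps.

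\textbf{Step 1: $\tau$ fails.} Suppose $\{B_k\}$ were a countable base; we may take each $B_k$ closed. Each $B_k$ misses $\{n\}$ for $n\ge N_k$, so $B_k$ lies in the closed set $\mathbb R\setminus\{n:n\ge N_k\}$. Since $B_k$ is closed and misses $N_{k+1}$ and beyond, there is $\epsilon_k>0$ with $(N_{k+1}-\epsilon_k, N_{k+1}+\epsilon_k)\cap B_k=\emptyset$; here we use the $(k+1)$st integer beyond $N_k$ to diagonalize. Put
\[
A=\bigcup_k\{\,m_k+\epsilon_k/2\,\},
\]
where $m_k\ge N_k$ is chosen strictly increasing and the radius $\epsilon_k$ is taken for $m_k$ relative to $B_k$. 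Then $A$ is closed, discrete, and misses every integer, so $A\in\cB_\tau$. But for each $k$, the point of $A$ near $m_k$ is not in $B_k$. Refining the choice so that all but finitely many points of $A$ lie outside each $B_k$ shows $A\not\subseteq B_k$ for every $k$, which is a contradiction.

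\textbf{Step 2: the family.} For $r\in(0,1)$, let $\sigma_r$ be generated by $\tau$ together with the sets $(n-r,n)\cup(n,n+r)$ for $n\in\N$. Equivalently, $\sigma_r$ is the topology of $\mathbb R$ with each integer $n$ isolated from its punctured $r$-neighbourhood. A cleaner choice is to make each interval $J_n=(n-r_n,n+r_n)$, with $r_n=r/2^n$, clopen; that is, $\sigma_r$ is the disjoint-sum topology on $\bigsqcup_n J_n\sqcup(\mathbb R\setminus\bigcup J_n)$. This is metrizable: a disjoint sum of metrizable spaces, the pieces being $\tau$-open and $\tau$-closed pieces split off. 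It also contains $\tau$, because $\tau$-open sets stay open in each summand, though one must check the complement piece $\mathbb R\setminus\bigcup J_n$ carries the subspace topology and $\tau$-open sets meet it in relatively open sets. Then $\cB_{\sigma_r}(\langle\{n\}\rangle)$ has the countable base $\{\mathbb R\setminus\bigcup_{j\ge N}J_j : N\in\N\}$. Each such set is $\sigma_r$-closed and misses $n$ for $n\ge N$. Conversely, if $\cl_{\sigma_r}(A)$ misses $n\ge N$, then, since $J_n\setminus\{n\}$ need not be excluded, I would instead make $\{n\}$ itself clopen. So the refined definition is: $\sigma_r$ makes each $J_n$ clopen and each $\{n\}$ isolated. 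Then $X\setminus\{n:n\ge N\}$ is $\sigma_r$-closed, giving the countable base, and Hu's enlargement holds automatically by the fact that induced bornologies satisfy $(\mathrm E_\sigma)$. By Theorem~\ref{thm:Hu} and \cite[Theorem~4.1]{Beer2001}, $\sigma_r$ is successful.

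\textbf{Step 3: the intersection is $\tau$.} This is the step I expect to be the main obstacle, and it needs care. Isolating each integer is not coarsened away by varying $r$, so the family must be altered. The decisive fix is to vary which integers are isolated. Let $\sigma_M$ isolate only the points $n\ge M$, with clopen shrinking intervals, and keep $\tau$ elsewhere. Then $\sigma_M$ is decreasing in $M$, and each $\sigma_M$ is still successful, since only the tail matters. For the intersection, take $U\in\bigcap_M\sigma_M$ and a point $x\in U$. If $x$ is not an integer, then $U$ contains a $\tau$-neighbourhood of $x$ in $\sigma_M$ for large $M$, provided the intervals $J_n$ shrink appropriately. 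If $x=n$, choose $M>n$, so that $\sigma_M$ is just $\tau$ near $n$. Hence $U\in\tau$. The delicate point is near non-integers accumulating to the intervals $J_n$, but the $J_n$ form a locally finite family whose removal can be avoided by choosing $M$ larger than $x$. This gives $\bigcap_M\sigma_M=\tau$, and the contradiction argument of the first paragraph finishes the proof.
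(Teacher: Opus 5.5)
Your final argument is correct and follows the paper's proof. The diagonal set $\{m_k+\epsilon_k/2\}$ is the paper's Step 1, and your $\sigma_M$ without the superfluous clopen intervals $J_n$ is the paper's $\tau_M$ (generated by $\tau$ and $F_M=\{M,M+1,\ldots\}$), whose intersection over $M$ is $\tau$. In a write-up, drop the abandoned $\sigma_r$ family and the incorrect base $\{\mathbb R\setminus\bigcup_{j\ge N}J_j\}$, and in Step 3 take $M>x+1$, not merely $M>x$, so that $x$ lies in the $\tau$-open set $(-\infty,M-r_M)$ on which $\sigma_M$ coincides with $\tau$.
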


\begin{proof}
The sequence $\langle T_n\rangle$ is $\tau$-convergent to the empty set.

We divide the argument into three steps.

\smallskip
\emph{Step 1: the usual topology is not successful.}
We show that $\cB_\tau(\langle T_n\rangle)$ has no countable base.  Suppose
to the contrary that $\{B_j:j\in\N\}$ is such a base.  Since
$\cl_{\tau}(B_j)\cap\N$ is finite, choose inductively
\[
 1<n_1<n_2<\cdots,
 \qquad n_j\notin\cl_{\tau}(B_j).
\]
Choose $0<\varepsilon_j<\frac14$ so that
$(n_j-\varepsilon_j,n_j+\varepsilon_j)\cap B_j=\emptyset$, and set
\[
 x_j=n_j+\frac{\varepsilon_j}{2},
 \qquad A=\{x_j:j\in\N\}.
\]
The set $A$ is closed in $\mathbb R$ and misses $\N$, so it belongs
to $\cB_\tau(\langle T_n\rangle)$.  Yet $x_j\notin B_j$ for every
$j$, and hence $A$ is contained in no member of the alleged base.  By
Theorem~\ref{thm:Hu}, the bornology is not a metric bornology.

\smallskip
\emph{Step 2: a decreasing family of successful supertopologies.}
For $m\in\N$, put
\[
 F_m=\{m,m+1,m+2,\ldots\}
 \]
and let $\tau_m$ be the topology generated by $\tau\cup\{F_m\}$.  The
set $F_m$ is clopen in $\tau_m$, as is its complement
$Y_m=\mathbb R\setminus F_m$.  The subspace $Y_m$ carries its usual
topology, while $F_m$ is discrete.  Thus $\tau_m$ is metrizable.

For completeness, define a bounded compatible metric $\rho_m$ by
\[
 \rho_m(x,y)=
 \begin{cases}
  \min\{|x-y|,\frac12\},&x,y\in Y_m,\\
  0,&x=y,\\
  \frac12,&x\neq y\text{ and }x,y\in F_m,\\
  1,&\text{one of }x,y\text{ lies in }F_m\text{ and the other in }Y_m.
 \end{cases}
\]
The function
\[
 h_m(x)=
 \begin{cases}
  0,&x\in Y_m,\\
  x,&x\in F_m
 \end{cases}
\]
is $\tau_m$-continuous.  Therefore
\[
 d_m(x,y)=\rho_m(x,y)+|h_m(x)-h_m(y)|
\]
is an unbounded metric compatible with $\tau_m$.  Indeed, compatibility
follows from $d_m\geq\rho_m$ and the $\rho_m$-continuity of $h_m$:
$d_m$-convergence implies $\rho_m$-convergence, while
$\rho_m$-convergence and continuity of $h_m$ imply $d_m$-convergence.
Since $\rho_m$ is bounded, a set is $d_m$-bounded exactly when its
image under $h_m$ is bounded, which occurs exactly when its intersection
with $F_m$ is finite.
Since every $n\geq m$ is isolated in $\tau_m$,
\begin{equation}
 \cB_{\tau_m}(\langle T_n\rangle)
 =\{A\subseteq\mathbb R:A\cap F_m\text{ is finite}\}
 =\cB_{d_m}(\mathbb R).
 \label{eq:successful-supertopology}
\end{equation}
Thus every $\tau_m$ is a successful supertopology.  These supertopologies decrease with $m$: the set
\[
 F_{m+1}=F_m\cap(m+\tfrac12,\infty)
\]
is $\tau_m$-open, and hence
$\tau_{m+1}\subseteq\tau_m$.

\smallskip
\emph{Step 3: their intersection.}
We claim that
\begin{equation}
 \bigcap_{m=1}^{\infty}\tau_m=\tau.
 \label{eq:intersection-topologies}
\end{equation}
Only the reverse inclusion requires proof.  Let $U$ belong to every
$\tau_m$, and take $x\in U$.  Choose $m>x$.  Then $x\in Y_m$, and
the topology induced by $\tau_m$ on $Y_m$ is its usual subspace topology.
Since $U$ is $\tau_m$-open, $x$ has a usual open neighborhood
contained in $U$.  Hence $U\in\tau$, proving
\eqref{eq:intersection-topologies}.

Equation \eqref{eq:intersection-topologies} says that the original topology is
recovered exactly as the intersection of $\{\tau_m: m \in \mathbb{N}\}$.  If a
coarsest successful supertopology $\sigma$ of $\tau$ existed, then
$\sigma\subseteq\tau_m$ for every $m$.  Consequently
\[
 \tau\subseteq\sigma
 \subseteq\bigcap_m\tau_m=\tau,
\]
so $\sigma=\tau$.  But Step 1 already establishes that the usual topology is not successful, so we obtain a contradiction.
\end{proof}

\section{A general dichotomous criterion}

We conclude this article by presenting a general theorem for which the preceding example is a particular instance. It constitutes a true dichotomy for
the topologies: a smallest successful metrizable supertopology can occur
only when no proper supertopology is needed.

\begin{thm} \label{thm:coarsest-supertopology}
Let $(X,\tau)$ be a noncompact metrizable space, let
$\langle T_n\rangle$ be a sequence of nonempty closed sets converging to the empty
set, and set
\begin{equation}
 F_n=\bigcup_{j=n}^{\infty}T_j.
 \label{eq:coarsest-tails}
\end{equation}
The following conditions are equivalent.
\begin{enumerate}
\item Among the metrizable supertopologies of $\tau$, there is a coarsest
      supertopology $\tau_0$ for which $\cB_{\tau_0}(\langle T_n\rangle)$ is a
      nontrivial metric bornology.
\item The bornology $\cB_\tau(\langle T_n\rangle)$ is already a
      nontrivial metric bornology.
\item For every $k$ there exists $n$ such that
      \begin{equation}
       F_n\subseteq\interior_{\tau}(F_k).
       \label{eq:coarsest-regular}
      \end{equation}
\end{enumerate}
In condition (3), one may equivalently require $n>k$.  When these
conditions hold, the coarsest topology is $\tau$ itself.
\end{thm}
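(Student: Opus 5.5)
The plan is to prove $(2)\Leftrightarrow(3)$ first, then $(2)\Rightarrow(1)$, and finally $(1)\Rightarrow(2)$. The last implication is where the substance lies; it generalizes Step~3 of Theorem~\ref{thm:counterexample}.

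\emph{$(2)\Leftrightarrow(3)$.} By Theorem~\ref{thm:intrinsic}, the tails $\langle F_n\rangle$ form a decreasing base of $\fD_\tau(\cB_\tau(\langle T_n\rangle))$. Corollary~\ref{cor:regular-polar} characterizes metricity by the existence of some decreasing polar base with the regular nesting property. So for (2)$\Rightarrow$(3) I must pass from an arbitrary regular base $\langle G_n\rangle$ to the tails. Given $k$, choose $m$ with $G_m\subseteq F_k$. Choose $m'$ with $G_{m'}\subseteq\interior(G_m)$. Then choose $n$ with $F_n\subseteq G_{m'}$. This gives $F_n\subseteq\interior(F_k)$. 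Since the $F_n$ decrease, $n$ may be enlarged, which handles the remark that one may require $n>k$. For (3)$\Rightarrow$(2), apply Corollary~\ref{cor:regular-polar} to the base $\langle F_n\rangle$ itself. The bornology is nontrivial automatically.

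\emph{$(2)\Rightarrow(1)$.} Here $\tau$ is itself successful, and it is trivially the coarsest supertopology of $\tau$. This also gives the final sentence of the theorem once (1)$\Rightarrow$(2) is known, because then $\tau_0\supseteq\tau$ and $\tau$ is successful.

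\emph{$(1)\Rightarrow(2)$.} Suppose $\tau_0$ is coarsest but (3) fails for $\tau$. I will build successful metrizable supertopologies $\tau_m$ with $\bigcap_m\tau_m=\tau$. Then $\tau\subseteq\tau_0\subseteq\bigcap\tau_m=\tau$, so $\tau_0=\tau$ is successful, a contradiction. Imitating Section~6, let $\tau_m$ be generated by $\tau\cup\{X\setminus F_m\}\cup\{\text{$\tau$-open subsets of }F_m\text{ relative to }F_m\}$, that is, declare $F_m$ clopen.

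Then $\tau_m$ is the topological sum of the subspaces $X\setminus F_m$ and $F_m$. Both are metrizable, $X\setminus F_m$ being open in $\tau$ and $F_m$ closed, so $\tau_m$ is metrizable. Successfulness should come from (3) holding trivially in $\tau_m$. For $k\ge m$, $F_k\subseteq F_m$ and $\interior_{\tau_m}(F_k)\supseteq\interior_{\tau|F_m}(F_k)$. Unfortunately, that relative interior need not contain any $F_n$. Hence I expect this to be the main obstacle: the Section~6 example worked because $F_m$ was discrete.

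The fallback is to make the whole $F_m$-region discrete-like differently. For instance, take $\tau_m$ generated by $\tau$ together with all sets $F_k$ for $k\ge m$ declared open. The $F_k$ for $k\ge m$ are $\tau$-closed, and since they are nested, the topology generated by $\tau$ together with each $F_k$ for $k\ge m$ (the $F_k$ decrease) still makes each $F_k$ clopen. By $\tau$-local finiteness of $\{T_j\}$, the partition $F_k\setminus F_{k+1}$ is locally finite in $\tau$. Consequently $\tau_m$ is the sum of $X\setminus F_m$ and the closed pieces $F_k\setminus F_{k+1}$ for $k\ge m$, each clopen in $\tau_m$. A topological sum of metrizable spaces is metrizable. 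In $\tau_m$ we have $F_{k+1}\subseteq\interior_{\tau_m}(F_k)$ for $k\ge m$, and tails are unchanged, so (3) holds for $\tau_m$ and $\tau_m$ is successful by (3)$\Rightarrow$(2) applied in $\tau_m$. Convergence to $\emptyset$ persists, and nontriviality holds since every $F_n$ is nonempty.

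For the intersection, take $U\in\bigcap_m\tau_m$ and $x\in U$. Since $\bigcap F_m=\emptyset$, choose $m$ with $x\notin F_m$. On the $\tau$-open set $X\setminus F_m$, which is $\tau_m$-clopen, $\tau_m$ agrees with $\tau$. Hence $U\cap(X\setminus F_m)$ is a $\tau$-neighborhood of $x$, so $U\in\tau$. The pieces I must check carefully are the sum decomposition and the metrizability of $\tau_m$.
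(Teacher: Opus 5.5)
Your proof is correct and follows the paper's route. Your fallback topologies $\tau_m$, generated by $\tau$ together with the tails $F_k$ ($k\ge m$), are exactly the paper's $\sigma_m$, and the squeeze $\tau\subseteq\tau_0\subseteq\bigcap_m\tau_m=\tau$ is the same argument. The only differences are in the verifications, and both are valid: you obtain metrizability from the clopen partition into $X\setminus F_m$ and the pieces $F_k\setminus F_{k+1}$ rather than from the explicit metric $\delta_m$, and you obtain successfulness by applying (3)$\Rightarrow$(2) in $\tau_m$ rather than by Hu's theorem with the clopen base $\{X\setminus F_n:n\ge m\}$.
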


\begin{proof}
Since $\langle T_n\rangle$ converges to the empty set, the family of its terms is locally
finite.  Hence every $F_n$ is closed, and the sequence $\langle F_n\rangle$ converges to the empty set.  As observed in the proof of
Corollary~\ref{cor:regular-polar}, the bornology
$\cB_\tau(\langle T_n\rangle)$ is metric precisely when the closed tail
tower has a cofinal subsequence $\langle F_{n_j}\rangle$ satisfying
\[
 F_{n_{j+1}}\subseteq\interior_{\tau}(F_{n_j})
 \qquad(j\in\N).
\]
This is equivalent to \emph{(3)}.  Indeed, a regular cofinal subsequence
gives \eqref{eq:coarsest-regular} for every $k$, while
\eqref{eq:coarsest-regular} allows one to choose such a subsequence
recursively.  Moreover, if an index $n\leq k$ witnesses
\eqref{eq:coarsest-regular}, then
\[
 F_k\subseteq F_n\subseteq\interior_{\tau}(F_k)\subseteq F_k,
\]
so $F_k$ is open and $F_{k+1}\subseteq\interior_{\tau}(F_k)$.  Thus the
witness may always be chosen larger than $k$.  This proves the equivalence of
\emph{(2)} and \emph{(3)}.

If \emph{(2)} holds, then $\tau$ itself is successful and is contained in
every supertopology of $\tau$.  Hence \emph{(1)} holds, with
$\tau_0=\tau$.

Conversely, suppose that \emph{(1)} holds, and let $\tau_0$ be the
coarsest successful supertopology.  For each $m\in\N$, let $\sigma_m$ be the
topology generated by
\[
 \tau\cup\{F_n:n\geq m\}.
\]
Each $F_n$, $n\geq m$, is clopen in $\sigma_m$.  The topology
$\sigma_m$ is metrizable.  To see this, fix a bounded metric $d$ compatible
with $\tau$.  For each $n\geq m$, let $\mathbf 1_{F_n}$ denote the
characteristic function of $F_n$, equal to $1$ on $F_n$ and to $0$ on
$X\setminus F_n$.  Then the metric
\[
 \delta_m(x,y)
 =d(x,y)+\sum_{n=m}^{\infty}2^{-n}
   \bigl|\mathbf 1_{F_n}(x)-\mathbf 1_{F_n}(y)\bigr|
\]
induces exactly the topology generated by $\tau$ and the sets
$F_n$, $n\geq m$.

The family
\[
 \{X\setminus F_n:n\geq m\}
\]
is a countable clopen base for
$\cB_{\sigma_m}(\langle T_n\rangle)$.  Indeed, membership in this
bornology means that the $\sigma_m$-closure misses some tail $F_n$, and
the witnessing index may be taken at least $m$.  Conversely,
$X\setminus F_n$ is $\sigma_m$-closed and misses every $T_j$ with
$j\geq n$.  Theorem~\ref{thm:Hu} therefore shows that
$\cB_{\sigma_m}(\langle T_n\rangle)$ is a metric bornology, and therefore every $\sigma_m$ is successful.

The topologies decrease with $m$, and
\begin{equation}
 \bigcap_{m=1}^{\infty}\sigma_m=\tau.
 \label{eq:universal-intersection}
\end{equation}
To prove the reverse inclusion, let $U$ belong to every $\sigma_m$ and fix
$x\in U$.  Choose $m$ with $x\notin F_m$.  Since
$F_n\subseteq F_m$ for every $n\geq m$, the topology induced by
$\sigma_m$ on $X\setminus F_m$ is just the topology induced by $\tau$.
The set $X\setminus F_m$ is $\tau$-open, so $x$ has a $\tau$-open
neighborhood contained in $U$.  Hence $U\in\tau$, proving
\eqref{eq:universal-intersection}.

Since $\tau_0$ is coarsest, it is contained in every $\sigma_m$.  Therefore
\[
 \tau\subseteq\tau_0
 \subseteq\bigcap_m\sigma_m=\tau,
\]
and $\tau_0=\tau$.  In particular, $\tau$ is successful, which finally proves \emph{(2)}.
\end{proof}

The singleton sequence has now played both of its roles.  In the usual
topology it gives the simplest warning that an induced bornology need not be
metric.  It also makes the polar criterion visible.  Its closed tails
\[
 F_n=\{n,n+1,n+2,\ldots\}
\]
have empty interior in the usual topology, so the regular-tail condition of
Corollary~\ref{cor:regular-polar} fails.  In $\tau_m$, all tails $F_n$,
$n\geq m$, are clopen, and the closed tail tower becomes regularly nested.  Thus
the successful supertopologies do exactly what the polar theory predicts: they
regularize the closed sets at infinity.

The obstruction is therefore not a shortage of successful supertopologies.  The
topologies $\tau_m$ are explicit, metrizable, and decrease all the way to
the usual topology.  What fails is attainment: at the intersection the
induced bornology loses the countable base required by Theorem~\ref{thm:Hu}.  There
is no least successful supertopology.


\begin{thebibliography}{99}

\bibitem{Atsuji1958}
M.~Atsuji,
\emph{Uniform continuity of continuous functions of metric spaces},
Pacific J. Math. \textbf{8} (1958), 11--16; erratum, ibid., 941.

\bibitem{Attouch1984}
H.~Attouch,
\emph{Variational Convergence for Functions and Operators},
Pitman, Boston, 1984.

\bibitem{Beer1993}
G.~Beer,
\emph{Topologies on Closed and Closed Convex Sets},
Kluwer Academic Publishers, Dordrecht, 1993.

\bibitem{Beer2001}
G.~Beer,
\emph{Metric bornologies and Kuratowski--Painlev\'e convergence to the empty set},
J. Convex Anal. \textbf{8} (2001), 279--289.

\bibitem{Beer2023}
G.~Beer,
\emph{Bornologies and Lipschitz Analysis},
CRC Press, Boca Raton, 2023.

\bibitem{BeerPajoohesh2026}
G.~Beer and H.~Pajoohesh,
\emph{On a theorem of S.-T. Hu},
J. Convex Anal. \textbf{33} (2026), 615--628.

\bibitem{Carlson1989}
J.~W.~Carlson,
\emph{Balanced filters},
Topology Proc. \textbf{14} (1989), 25--40.

\bibitem{Engelking1977}
R.~Engelking,
\emph{General Topology},
Polish Scientific Publishers, Warsaw, 1977.

\bibitem{EshedEtAl2020}
A.~Eshed, M.~V.~Ferrer, S.~Hern\'andez, P.~Szewczak, and B.~Tsaban,
\emph{A classification of the cofinal structures of precompacta},
Ann. Pure Appl. Logic \textbf{171} (2020), 102810.

\bibitem{GarridoMerono2014}
M.~I.~Garrido and A.~S.~Mero\~no,
\emph{New types of completeness in metric spaces},
Ann. Acad. Sci. Fenn. Math. \textbf{39} (2014), 733--758.

\bibitem{Hejcman1959}
J.~Hejcman,
\emph{Boundedness in uniform spaces and topological groups},
Czechoslovak Math. J. \textbf{9} (1959), 544--563.

\bibitem{Hu1949}
S.-T.~Hu,
\emph{Boundedness in a topological space},
J. Math. Pures Appl. (9) \textbf{28} (1949), 287--320.

\bibitem{KleinThompson1984}
E.~Klein and A.~C.~Thompson,
\emph{Theory of Correspondences},
Wiley-Interscience, New York, 1984.

\bibitem{KunduAggarwal2023}
S.~Kundu and M.~Aggarwal,
\emph{Metric Spaces and Related Analysis},
World Scientific, Singapore, 2023.


\bibitem{Rainwater1959}
J.~Rainwater,
\emph{Spaces whose finest uniformity is metric},
Pacific J. Math. \textbf{9} (1959), 567--570.

\bibitem{Vaughan1937}
H.~E.~Vaughan,
\emph{On locally compact metrizable spaces},
Bull. Amer. Math. Soc. \textbf{43} (1937), 532--535.

\end{thebibliography}
\end{document}